\documentclass{article}
\usepackage[utf8]{inputenc}
\usepackage[letterpaper,margin=1in]{geometry}
\usepackage{amsmath, amssymb, amsthm, mathtools}
\usepackage{tikz}
\usetikzlibrary{arrows.meta}
\usepackage{enumitem}
\usepackage{lineno}

\newcommand{\ds}{\displaystyle}
\newtheorem{theorem}{Theorem}[section]
\newtheorem{proposition}[theorem]{Proposition} 
\newtheorem{lemma}[theorem]{Lemma}
\newtheorem{corollary}[theorem]{Corollary}
\theoremstyle{definition}\newtheorem{remark}[theorem]{Remark}
\theoremstyle{definition}\newtheorem{example}[theorem]{Example}

\DeclareMathOperator{\Child}{Chi}
\DeclareMathOperator{\parent}{par}
\DeclareMathOperator{\Gen}{Gen}
\DeclareMathOperator{\Span}{span}
\DeclareMathOperator{\sgn}{sgn}
\newcommand{\Z}{\mathbb{Z}}
\newcommand{\R}{\mathbb{R}}
\newcommand{\N}{\mathbb{N}}
\newcommand{\C}{\mathbb{C}}
\newcommand{\K}{\mathbb{K}}
\begin{document}

\title{Chain recurrent shifts on trees}

\author{
  Andrew Mortensen \\
  \small Department of Mathematics, Statistics, and Computer Science \\
  \small St.\ Olaf College, Northfield, MN 55057, USA \\
  \small \texttt{morten3@stolaf.edu}
  \and
  David Walmsley\thanks{Corresponding author.} \\
  \small Department of Mathematics, Statistics, and Computer Science \\
  \small St.\ Olaf College, Northfield, MN 55057, USA \\
  \small \texttt{walmsl1@stolaf.edu}
}

\date{\today}
\maketitle

\begin{abstract}
    We characterize when a weighted backward shift is chain recurrent on the $\ell^p$ ($1\leq p<\infty$) and $c_0$ spaces of a directed tree. The characterization is given in terms of two divergence conditions on the weights: a forward condition on the descendants of each vertex and, in the unrooted case, a backward condition on the descendants of each ancestor. The conditions reduce, in the case of symmetric weighted shifts on symmetric trees, to the classical characterizations of chain recurrence on the sequence spaces $\ell^p(\N)$, $\ell^p(\Z)$, $c_0(\N)$, and $c_0(\Z)$.
\end{abstract}

\noindent\textit{2020 Mathematics Subject Classification:} 47A16, 47B37, 37B20, 37B65. 

\noindent\textit{Keywords:} Chain recurrence, weighted backward shift, directed tree, linear dynamics, $\ell^p$ space, $c_0$ space. 

\section{Introduction}\label{Introduction}
For a map $f:X\to X$ on a metric space $X$ with metric $d$, a \emph{$\delta$-pseudotrajectory} of $f$ is a finite or infinite sequence $(x_{j})_{i<j<k}$ in $X$, where $-\infty\le i<k\le\infty$ and $k-i\ge3$, such that 
\[d(f(x_{j}),x_{j+1})\le\delta \text{ \hskip .1in for all \hskip .1in} i<j<k-1. \]
A finite $\delta$-pseudotrajectory of the form $(x_{j})_{j=0}^{k}$ is also called a \emph{$\delta$-chain} for $f$ from $x_{0}$ to $x_{k}$, with $k$ its length. The map $f$ is called \emph{chain recurrent} (resp. \emph{chain transitive}) if for every $x\in X$ (resp. $x, y\in X$) and every $\delta>0$, there is a $\delta$-chain for $f$ from $x$ to itself (resp. from $x$ to $y$). 
Furthermore, $f$ is called \emph{chain mixing} if for every $x, y\in X$ and every $\delta>0$, there exists $k_{0}\in\mathbb{N}$ such that for every $k\ge k_{0}$, there is a $\delta$-chain for $f$ from $x$ to $y$ with length $k$. In general, the following implications are immediate consequences of the definitions involved:
    \begin{center}
        chain mixing $\Rightarrow$ chain transitive $\Rightarrow$ chain recurrent.
    \end{center}
When $X$ is a topological vector space and $T:X\to X$ is linear (not necessarily continuous), all three notions are in fact equivalent, as was observed in \cite[Proposition 19]{(1)Alves}. 
    
For a map $f:X\to X$, let $CR(f)$ be its set of chain recurrent points, specifically $CR(f)=\{x\in X: \forall \, \delta>0, \exists \, \delta$-chain for $f$ from $x$ to itself$\}$. Then $f$ is chain recurrent if and only if $CR(f)=X$. Moreover, we say that $f$ has the \emph{positive shadowing property} if for every $\epsilon>0$, there exists $\delta>0$ such that every $\delta$-pseudotrajectory $(x_{j})_{j\in\mathbb{N}_{0}}$ of $f$ is $\epsilon$-shadowed by a real trajectory of $f$, that is, there exists $x\in X$ such that $d(x_{j},f^{j}(x))<\epsilon$ for all $j\in\mathbb{N}_{0}$. If $f$ is bijective, then the shadowing property is defined by replacing the set $\mathbb{N}_{0}$ by the set $\mathbb{Z}$ in the above definition. 

The concepts of pseudotrajectories, shadowing, and chain recurrence trace their roots back to the foundational research of Conley \cite{(20)Conley}, Sina\u{\i} \cite{(43)Sinai}, and Bowen \cite{(15)Bowen} in the early 1970s. These ideas are essential to the qualitative analysis of differential equations and, more generally, dynamical systems. For a detailed overview of these concepts and their practical uses, we refer the reader to the books \cite{(4)Aoki,(22)Devaney,(29)Katok,(39)Palmer,(40)Pilyugin,(42)Shub}. 

Only recently have shadowing and chain recurrence been studied within linear dynamics \cite{(1)Alves,(3)Antunes,(9)Bernardes,(11)Bernardes,(16)Cirilo}. For those interested in learning more, we recommend the excellent work of Bernardes and Peris in \cite{BernardesPeris} as a rich source of information. A recurring theme has been that the linear setting can differ markedly from the classical compact one. For example, the first examples of non-hyperbolic operators with the shadowing property were exhibited in \cite{(9)Bernardes}. Along other lines, \cite{LopezPapachainrecontrees} gave an example of a continuous linear operator $T:X\to X$ on a Banach space $X$ for which the restriction of $T$ to its set of chain recurrent vectors $CR(T)$ is not chain recurrent, in contrast to the classical compact setting where \cite[Theorem 3.1.6]{(4)Aoki} guarantees that $f|_{CR(f)}$ is always chain recurrent for a homeomorphism $f:K\to K$ on a compact metric space $K$. 

The example of \cite{LopezPapachainrecontrees} was a weighted backward shift on a directed tree, and that setting is the primary focus of our present work. While \cite{LopezPapachainrecontrees} uses such shifts as a tool to construct a specific counterexample answering a question of Bernardes and Peris \cite{BernardesPeris}, our aim here is the systematic characterization of when chain recurrence occurs within this class. The ``classical Banach sequence spaces'' are $\ell^p(\N), 1\leq p<\infty$, and $c_0(\N)$, with their counterparts $\ell^p(\Z), 1\leq p<\infty$, and $c_0(\Z)$. Viewing $\N$ as a rooted directed tree and $\Z$ as an unrooted directed tree, it is natural to expand the family of weighted shifts by replacing $\N$ and $\Z$ with other directed trees. The dynamical study of such operators was initiated by Mart\'inez-Avenda\~no \cite{Martinez}, and the program was carried forward by Grosse-Erdmann and Papathanasiou \cite{GrossePapahypercyclic}, who obtained a complete characterization of the hypercyclicity, weak mixing, and mixing properties of weighted backward shifts on the classical Banach sequence spaces of directed trees. 

The program initiated in \cite{GrossePapahypercyclic} has since been extended in several directions, each characterizing a different dynamical property of these shifts in terms of the weights. Grosse-Erdmann and Papathanasiou themselves \cite{GrossePapachaotic} characterized the chaotic shifts, a class strictly contained in the hypercyclic shifts. More recently, Abakumov and Abbar \cite{AbakumovAbbar} characterized $\mathcal{F}$-transitivity (and the associated $\mathcal{F}$-recurrence) for Furstenberg families $\mathcal{F}$, in the framework of B\`es, Menet, Peris, and Puig \cite{BMPP}; as a consequence \cite[Corollary 5.2]{AbakumovAbbar}, they showed that on unrooted trees ordinary topological recurrence coincides with hypercyclicity. Our goal in this paper is to add a complementary entry to this program: we characterize, in terms of the weights, precisely when a weighted backward shift on a classical Banach sequence space on a directed tree is chain recurrent. Since chain recurrence is, in general, weaker than hypercyclicity in linear dynamics \cite[Proposition 19]{(1)Alves}, the class of operators we characterize sits one level below the previously studied ones, properly containing the hypercyclic shifts of \cite{GrossePapahypercyclic}, the chaotic shifts of \cite{GrossePapachaotic}, and the $\mathcal{F}$-recurrent shifts on unrooted trees of \cite{AbakumovAbbar}. 

The paper is organized as follows. Section \ref{General background and definitions} collects the necessary background on directed trees, sequence spaces, and weighted shifts, and includes two reductions (Propositions \ref{leafless prop} and \ref{forward shift prop}) showing that the analysis can be restricted to weighted backward shifts on leafless trees. Section \ref{Characterization section} contains our main characterizations (Theorems \ref{unrooted theorem} and \ref{rooted theorem}) followed by worked examples of Rolewicz operators and symmetric weighted shifts on symmetric trees, the latter of which allows us to recover classical characterizations of chain recurrence on the sequence spaces $\ell^p(\N)$, $\ell^p(\Z)$, $c_0(\N)$, and $c_0(\Z)$.

\section{General background and definitions}\label{General background and definitions}

\subsection{Directed trees}
As in \cite{Jablonski,GrossePapahypercyclic,GrossePapachaotic,LopezPapachainrecontrees}, a \textit{directed tree} $(V,E)$ is a connected directed graph with a countable vertex set $V$ and a set of directed edges $E\subset V\times V \setminus \{(v,v): v\in V\}$ such that:
\begin{itemize}
    \item $(V,E)$ has no cycles;
    \item each vertex $v\in V$ has at most one \textit{parent}, that is, a unique vertex denoted by $\parent(v)\in V$ such that $(\parent(v),v)\in E$;
    \item there is at most one vertex with no parent, called the \textit{root} and denoted by $\mathsf{root}$ when it exists. 
\end{itemize}
Any vertex whose parent is $v$ is called a \textit{child} of $v$, and the set of children of $v$ is denoted by $\Child(v)$. Inductively, we define $\Child^0 (v)=\{v\}$, and for $n\geq 1$, $\Child^n (v)$ is the union of the sets of children of each vertex in $\Child^{n-1}(v)$. A vertex with no children is called a \emph{leaf}. If the tree has a root, we call $\Child^n(\mathsf{root})$ the \emph{$n$th generation} of the tree, $n\in \N_0$, and denote the $n$th generation by $\Gen_n$. It is easy to see that the generations of a rooted tree partition the vertex set. Following \cite{GrossePapahypercyclic}, for an unrooted tree, we fix a vertex $v_0\in V$ and define, for $n\in \Z$, the \emph{$n$th generation} with respect to $v_0$ as 
\begin{align*}
    \Gen_n = \Gen_n(v_0) = \{ v \in V : \exists m \geq \max(-n,0) \text{ such that } \parent^{n+m}(v) = \parent^m(v_0) \}.
\end{align*}
It is also not difficult to see that the generations of an unrooted tree partition the vertex set; we prove this in Lemma \ref{partition lemma} below. Figure \ref{fig:grouped_generations_tree} illustrates the generations of a typical unrooted tree. 

\begin{figure}[htbp]
    \centering
    \begin{tikzpicture}[
        vertex/.style={circle, fill=black, inner sep=1.2pt},
        edge/.style={-Latex, semithick},
        dashed edge/.style={-Latex, semithick, dashed}
    ]
        
        \draw[dashed edge] (-1.5, 0) -- (0, 0);
        
        
        \draw[thick, dotted, gray] (0, 0) ellipse [x radius=0.5, y radius=0.7];
        \node[above, font=\bfseries] at (0, 0.7) {$\Gen_{-2}$};
        
        \draw[thick, dotted, gray] (2, 0) ellipse [x radius=0.7, y radius=2.6];
        \node[above, font=\bfseries] at (2, 2.6) {$\Gen_{-1}$};
        
        \draw[thick, dotted, gray] (4, 0) ellipse [x radius=0.7, y radius=3.2];
        \node[above, font=\bfseries] at (4, 3.2) {$\Gen_0$};
        
        \draw[thick, dotted, gray] (6, 0.1) ellipse [x radius=0.7, y radius=3.7];
        \node[above, font=\bfseries] at (6, 3.8) {$\Gen_1$};

        
        \node[vertex] (root) at (0, 0) {};
        
        \node[vertex] (t1) at (2, 2) {};
        \node[vertex] (m1) at (2, 0) {};
        \node[vertex] (b1) at (2, -2) {};
        
        \node[vertex] (t2_1) at (4, 2.7) {};
        
        \node[vertex, label=above:$v_0$] (t2_2) at (4, 1.3) {}; 
        
        \node[vertex] (m2_1) at (4, 0) {};
        
        \node[vertex] (b2_1) at (4, -1.3) {};
        \node[vertex] (b2_2) at (4, -2.7) {};
        
        \node[vertex] (t3_1) at (6, 3.2) {};
        \node[vertex] (t3_2) at (6, 2.2) {};
        
        \node[vertex] (t3_3) at (6, 1.6) {};
        \node[vertex] (t3_4) at (6, 1.0) {};
        
        \node[vertex] (m3_1) at (6, 0) {};
        
        \node[vertex] (b3_1) at (6, -1.3) {};
        
        \node[vertex] (b3_2) at (6, -2.4) {};
        \node[vertex] (b3_3) at (6, -3.0) {};

        
        \draw[edge] (root) -- (t1);
        \draw[edge] (root) -- (m1);
        \draw[edge] (root) -- (b1);
        
        \draw[edge] (t1) -- (t2_1);
        \draw[edge] (t1) -- (t2_2);
        
        \draw[edge] (m1) -- (m2_1);
        
        \draw[edge] (b1) -- (b2_1);
        \draw[edge] (b1) -- (b2_2);
        
        \draw[edge] (t2_1) -- (t3_1);
        \draw[edge] (t2_1) -- (t3_2);
        
        \draw[edge] (t2_2) -- (t3_3); 
        \draw[edge] (t2_2) -- (t3_4); 
        
        \draw[edge] (m2_1) -- (m3_1);
        
        \draw[edge] (b2_1) -- (b3_1);
        \draw[edge] (b2_2) -- (b3_2);
        \draw[edge] (b2_2) -- (b3_3);
        
        
        \draw[dashed edge] (t3_1) -- (7.5, 3.2);
        \draw[dashed edge] (t3_2) -- (7.5, 2.2);
        
        \draw[dashed edge] (t3_3) -- (7.5, 1.6);
        \draw[dashed edge] (t3_4) -- (7.5, 1.0);
        
        \draw[dashed edge] (m3_1) -- (7.5, 0);
        
        \draw[dashed edge] (b3_1) -- (7.5, -1.3);
        
        \draw[dashed edge] (b3_2) -- (7.5, -2.4);
        \draw[dashed edge] (b3_3) -- (7.5, -3.0);
        
    \end{tikzpicture}
    \caption{An unrooted directed tree with vertex $v_0$ and its generations.}
    \label{fig:grouped_generations_tree}
\end{figure}

\begin{lemma}\label{partition lemma}
    Let $(V,E)$ be an unrooted directed leafless tree, and fix $v_0\in V$ so as to enumerate the generations of the tree. If $k<n$, then 
    \[\bigcup_{v\in \Gen_k} \Child^{n-k}(v)=\Gen_n \text{ and } \Gen_k = \bigcup_{w\in \Gen_n} \{\parent^{n-k}(w)\}.\]
    Consequently, $|\Gen_k|\leq |\Gen_n|$ whenever $k<n$.
\end{lemma}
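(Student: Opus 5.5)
The plan is to reduce everything to one basic identity: for every $n\in\Z$,
\[\Gen_{n+1}=\bigcup_{v\in\Gen_n}\Child(v), \qquad\text{equivalently}\qquad \parent(\Gen_{n+1})=\Gen_n .\]
Both claimed equalities then follow by induction on $n-k$, and the final inequality is a consequence of the second equality.

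\textbf{Step 1: parents land in the previous generation.} Let $w\in\Gen_{n+1}$. Then there is $m\ge\max(-n-1,0)$ with $\parent^{n+1+m}(w)=\parent^m(v_0)$. The tree is unrooted, so every vertex has a parent and all iterates $\parent^j$ are defined; this is where unrootedness is used.
\begin{itemize}
\item If $n+1+m\ge 1$, then with $v=\parent(w)$ we get $\parent^{n+m}(v)=\parent^m(v_0)$.
\item Otherwise $n+1+m=0$, so $w=\parent^m(v_0)$ with $m=-n-1\ge 0$. Take $m'=m+1$; then $n+m'=0$ and $\parent^{n+m'}(v)=v=\parent^{m'}(v_0)$.
\end{itemize}
In both cases I will check that the index satisfies $m\ge\max(-n,0)$ (resp.\ $m'\ge\max(-n,0)$), which is routine bookkeeping. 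Hence $v\in\Gen_n$.

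\textbf{Step 2: children land in the next generation.} Let $v\in\Gen_n$ with witness $m$, and let $w\in\Child(v)$. Then $\parent^{n+1+m}(w)=\parent^{n+m}(v)=\parent^m(v_0)$.
\begin{itemize}
\item If $m\ge\max(-n-1,0)$, then $m$ itself witnesses $w\in\Gen_{n+1}$.
\item The remaining case is $m=-n-1$, which cannot occur because $m\ge -n$.
\end{itemize}
So the inclusion is immediate.

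\textbf{Step 3: every vertex of $\Gen_n$ has a child in $\Gen_{n+1}$.} This is where leaflessness is used. By Step 2, any child of $v\in\Gen_n$ lies in $\Gen_{n+1}$. Combined with Step 1, this gives $\parent(\Gen_{n+1})=\Gen_n$ and $\bigcup_{v\in\Gen_n}\Child(v)=\Gen_{n+1}$.

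\textbf{Step 4: induction.} For general $k<n$, I induct on $n-k$. The inductive step uses $\Child^{j+1}(v)=\bigcup_{u\in\Child^j(v)}\Child(u)$ and $\parent^{j+1}=\parent\circ\parent^j$ to compose the one-step identities. For the cardinality claim, the map $\parent^{n-k}:\Gen_n\to\Gen_k$ is surjective by the second equality, so $|\Gen_k|\le|\Gen_n|$. This holds as a cardinal inequality for countable sets, and it also holds when the sets are infinite.

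\textbf{Main obstacle.} The only delicate part is the index bookkeeping with $\max(-n,0)$ in Step 1, specifically the boundary case where the witness has to be increased by one. Everything else is formal.
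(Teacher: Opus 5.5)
Your proof is correct. It rests on the same witness-shifting idea as the paper but organizes it differently.

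The paper works directly with arbitrary $k<n$. For the nontrivial inclusion $\Gen_n\subseteq\bigcup_{v\in\Gen_k}\Child^{n-k}(v)$, it takes a witness $m$ for $u\in\Gen_n$ and replaces it in one stroke by $l=m+\max(-k,0)$, which is always large enough to witness $\parent^{n-k}(u)\in\Gen_k$. It leaves the second equality to ``a similar argument.'' It then reads off $|\Gen_k|\le|\Gen_n|$ from the first equality using leaflessness, and implicitly also the fact that the sets $\Child^{n-k}(v)$, $v\in\Gen_k$, are pairwise disjoint because parents are unique.

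You instead isolate the one-step case $n=k+1$, where the witness never needs to be raised by more than one (your boundary case $n+1+m=0$), and then compose by induction on $n-k$. The paper's route is shorter and avoids induction, at the cost of the less transparent choice of $l$. Your route buys three things:
\begin{enumerate}
\item The bookkeeping is more transparent.
\item You actually prove the second equality rather than deferring it. This matters because it is not quite ``similar'': the inclusion $\Gen_n\subseteq\parent(\Gen_{n+1})$ is exactly where leaflessness enters, whereas the argument for the first equality never uses it.
\item You obtain the cardinality bound from surjectivity of $\parent^{n-k}\colon\Gen_n\to\Gen_k$, with no disjointness argument needed.
\end{enumerate}
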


\begin{proof}
    To show $\bigcup_{v\in \Gen_k} \Child^{n-k}(v) \subseteq \Gen_n$, let $u\in \bigcup_{v\in \Gen_k} \Child^{n-k}(v).$ Then there exists $v\in \Gen_k$ for which $v=\parent^{n-k}(u)$. Since $v\in \Gen_k$, there exists an integer $m\geq \max(-k,0)$ for which $\parent^{m+k}(v)=\parent^m(v_0)$. Substituting $v=\parent^{n-k}(u)$ into the previous expression and simplifying yields $\parent^{m+n}(u)=\parent^m(v_0)$, and since $m\geq \max(-k,0)\geq \max(-n,0)$, we have that $u\in \Gen_n$.

    To show the reverse containment $\Gen_n\subseteq \bigcup_{v\in \Gen_k} \Child^{n-k}(v)$, let $u\in \Gen_n$, which implies the existence of an integer $m\geq \max(-n,0)$ such that $\parent^{m+n}(u)=\parent^m(v_0)$. Let $v=\parent^{n-k}(u)$ and $l=m+\max(-k,0)$. Then $l\geq \max(-k,0)$, and we compute that
    \begin{align*}
        \parent^{l+k}(v)=\parent^{l+n}(u)=\parent^{l-m}(\parent^{m+n}(u))=\parent^{l-m}(\parent^m(v_0))=\parent^l(v_0).
    \end{align*}
    Hence $v\in \Gen_k$, and $v=\parent^{n-k}(u)$ implies $u\in \Child^{n-k}(v)$, which establishes the reverse containment and shows $\bigcup_{v\in \Gen_k} \Child^{n-k}(v)=\Gen_n$.  Since the tree is leafless, each vertex has at least one child, and we can deduce from the set equality $\bigcup_{v\in \Gen_k} \Child^{n-k}(v)=\Gen_n$ that $|\Gen_k|\leq |\Gen_n|$ whenever $k<n$.
    
    The set equality $\Gen_k = \bigcup_{w\in \Gen_n} \{\parent^{n-k}(w)\}$ can be proved with a similar argument.
\end{proof}

As in Figure \ref{fig:grouped_generations_tree}, we  imagine our trees to be directed from left to right, a convention justified by Lemma \ref{partition lemma}. We say an unrooted tree has a \emph{free left end} if there is some $n\in\Z$ for which $\Gen_k$ is a singleton for all $k\leq n$. There is a simple criterion to check if an unrooted tree has a free left end. 

\begin{lemma}\label{free left end lemma}
    Let $(V,E)$ be an unrooted directed leafless tree, and fix $v_0\in V$ so as to enumerate the generations of the tree. Then $V$ has a free left end if and only if there exists $n\in \Z$ for which $|\Gen_n|<\infty$.
\end{lemma}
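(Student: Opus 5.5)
The forward direction is immediate: if $V$ has a free left end, then $\Gen_k$ is a singleton for all sufficiently negative $k$, so in particular some $\Gen_n$ is finite. The content is in the converse, and I would prove it using the second set equality of Lemma \ref{partition lemma} together with the fact that an unrooted tree has no vertex without a parent.

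Suppose $|\Gen_n|<\infty$ for some $n\in\Z$. For every $k<n$, Lemma \ref{partition lemma} gives $\Gen_k=\{\parent^{n-k}(w):w\in\Gen_n\}$. Hence each $\Gen_k$ with $k\le n$ is finite and nonempty, and the cardinality function $k\mapsto|\Gen_k|$ is nondecreasing (again by the lemma). A nondecreasing function from $\{k\le n\}$ into the positive integers must be eventually constant as $k\to-\infty$. The goal is then to show that this constant value is $1$.

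To do this, I would use the finitely many ancestral chains $(\parent^j(w))_{j\ge0}$ for $w\in\Gen_n$. For two vertices $w,w'$ in a connected tree, there is a path between them. Since each vertex has at most one parent and there are no cycles, this path goes up from $w$ to a common ancestor and then down to $w'$. So there exist $a,b\ge0$ with $\parent^a(w)=\parent^b(w')$. Both $w$ and $w'$ lie in $\Gen_n$, so they have the same generation index. The generation of $\parent^a(w)$ is $n-a$, and the generation of $\parent^b(w')$ is $n-b$; since the generations partition $V$ (Lemma \ref{partition lemma}), this forces $a=b$. This step is the main point to check carefully: it requires confirming that generation indices are well defined, that is, that a vertex cannot lie in two generations, which is exactly the partition property.

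Taking $M$ to be the maximum of these $a$ over the finitely many pairs $(w,w')$, we get $\parent^{M}(w)=\parent^{M}(w')$ for all $w,w'\in\Gen_n$. Once two ancestral chains meet, they coincide from then on, so every $\Gen_k$ with $k\le n-M$ is a single vertex. This gives a free left end. The finiteness of $\Gen_n$ is used precisely to make $M$ finite.
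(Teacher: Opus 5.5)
Your proof is correct, but it is organized differently from the paper's.

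The paper argues by strict descent. If $1<|\Gen_n|<\infty$, it takes two distinct $u,v\in\Gen_n$ and reads off, directly from the definition of $\Gen_n$, an index $m>-n$ with $\parent^{n+m}(u)=\parent^m(v_0)=\parent^{n+m}(v)$. It then observes via Lemma \ref{partition lemma} that $\parent^{n+m}\colon\Gen_n\to\Gen_{-m}$ is surjective but not injective, so $|\Gen_{-m}|<|\Gen_n|$, and iterates until a singleton generation appears.

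You instead collapse all of $\Gen_n$ at once by maximizing the meeting depth over the finitely many pairs. This removes the iteration and exhibits an explicit index $n-M$ past which the left end is free. The cost is twofold. First, you obtain common ancestors from connectivity, via the up-then-down shape of a simple path, which really uses only uniqueness of parents. Second, you need \emph{disjointness} of the generations to force $a=b$. Lemma \ref{partition lemma} as stated gives only the two set equalities, not disjointness, so this step deserves its own line. If $x\in\Gen_n\cap\Gen_{n'}$ with $n<n'$, passing to a common larger $m$ in the two defining conditions gives $\parent^{n'-n}(y)=y$ for $y=\parent^{n+m}(x)$, which is impossible in a tree.

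Both detours disappear if you combine your one-shot idea with the paper's source of common ancestors. For each $w\in\Gen_n$, choose $m_w\geq\max(-n,0)$ with $\parent^{n+m_w}(w)=\parent^{m_w}(v_0)$, and set $M=\max_w m_w$. Then $\parent^{n+M}(w)=\parent^M(v_0)$ for all $w\in\Gen_n$, so by Lemma \ref{partition lemma} every $\Gen_k$ with $k\leq -M$ is a singleton.

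Your observation that $k\mapsto|\Gen_k|$ is eventually constant is never used and can be dropped.
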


\begin{proof}
    The ($\Rightarrow$) direction is obvious. For the ($\Leftarrow$) direction, suppose $|\Gen_n|<\infty$ for some $n\in \Z$. If $|\Gen_n|=1$, then for every $k<n$, Lemma \ref{partition lemma} implies $|\Gen_k|\leq |\Gen_n|=1$, which shows $V$ has a free left end. So suppose $1<|\Gen_n|<\infty$. It suffices to find an integer $k<n$ with $|\Gen_k|<|\Gen_n|$. 

    Since $|\Gen_n|>1$, there exist vertices $u,v\in \Gen_n$ with $u\neq v$ and integers $l\geq \max(-n,0)$ and $m\geq \max(-n,0)$ such that 
    \begin{align*}
        \parent^{m+n}(v)=\parent^m(v_0) \text{ and } \parent^{l+n}(u)=\parent^l(v_0).
    \end{align*}
    Without loss of generality, we may assume $m\geq l$. Then $\parent^{m+n}(u)=\parent^m(v_0)$ as well, since
    \begin{align*}
        \parent^{m+n}(u)=\parent^{m-l}(\parent^{l+n}(u))=\parent^{m-l}(\parent^{l}(v_0)) = \parent^m(v_0).
    \end{align*}
    It is impossible for $m$ to equal $-n$, since that would imply, by the above equations, that $u=\parent^m(v_0)=v$. Then since $m\geq \max(-n,0)$, we can deduce that $m>-n$.
    
    Since $\parent^m(v_0)\in \Gen_{-m}$, we have
    \begin{align*}
        \{u,v\}\subseteq \Child^{n+m}(\parent^m(v_0))\subseteq \Gen_n,
    \end{align*}
    where the last set containment follows from Lemma \ref{partition lemma}. Thus the mapping $g:\Gen_n\to \Gen_{-m}$ given by $g(w)=\parent^{n+m}(w)$ is not an injective mapping, since $u,v\in \Gen_n$ and $g(u)=g(v)$. But the map $g$ is surjective by Lemma \ref{partition lemma}, and the existence of a non-injective surjection between the finite sets $\Gen_n$ and $\Gen_{-m}$ shows $|\Gen_{-m}|<|\Gen_n|$. Iterating this strict-reduction argument finitely many times produces a generation of cardinality one, so $V$ has a free left end.
\end{proof}

\subsection{Sequence spaces and weighted shifts on directed trees}
Let $\K$ denote either $\R$ or $\C$ and let $V$ be an arbitrary finite or countable set. We denote by $\K^V$ the space of all real or complex sequences $f = (f(v))_{v \in V}$ over $V$, and endow $\K^V$ with the product topology. The canonical unit sequences
are denoted by $e_v = \chi_{\{v\}}$, $v \in V$; that is,
\begin{align*}
    e_v(u):=\begin{cases} 
        1 & \text{if } u=v, \\
        0 & \text{if } u\neq v. \\
    \end{cases}
\end{align*} 
A subspace of $\K^V$ is called a \emph{Banach sequence space over $V$} if it is endowed with a Banach space topology for which the canonical embedding into $\K^V$ is continuous. 

Our focus is on the following Banach sequence spaces over $V$:
\begin{align*}
    \ell^p(V) =\left\{ f \in \K^V : \|f\|_p \coloneqq \sum_{v \in V} |f(v)|^p < \infty \right\}, \, 1 \leq p < \infty,
\end{align*}
with the typical modification for $\ell^\infty(V)$, and
\begin{align*}
    c_0(V) = \left\{f \in \mathbb{K}^V : \forall \varepsilon > 0,\ \exists F \subset V \text{ finite}, \, \forall v \in V \setminus F,\ |f(v)| < \varepsilon
\right\},
\end{align*}
which is endowed with the canonical norm of $\ell^\infty(V)$. We refer to elements of these spaces as \emph{functions} rather than \emph{sequences}, to avoid confusion when discussing sequences of such elements. For a Banach sequence space $X$ above, we denote by $0_X$ the additive identity in $X$. 

Gathering several facts from the literature, we present next a lemma which is crucial for our main result later. 
\begin{lemma}\label{recurrence lemma}
    Let $V$ be a finite or countable set, let $X$ be $c_0$ or one of the $\ell^p(V)$ spaces, and let $\|\cdot\|$ denote the norm of the space $X$. The following statements hold for every linear operator $T:X\to X$.
    \begin{enumerate}
        \item[(a)] Let $(f_l)_{l=0}^n$ be a finite sequence in $X$. Then $(f_l)_{l=0}^n$ is a $\delta$-chain for $T$ if and only if the finite sequence $g_l\coloneqq f_l-T(f_{l-1})$, $1\leq l \leq n$, fulfills that $f_n = T^n(f_0) + \sum_{l=1}^n T^{n-l}(g_l)$ and $\|g_l\|<\delta$ for every $1\leq l \leq n$.
        \item[(b)] $T$ is chain recurrent if and only if for every canonical basis vector $e_v\in X$ and every $\delta>0$, there is a $\delta$-chain for $T$ from $e_v$ to $0_X$ and a $\delta$-chain for $T$ from $0_X$ to $e_v$.
    \end{enumerate}
\end{lemma}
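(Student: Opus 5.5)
Part (a) is purely algebraic, and I would prove it by induction on $n$. By definition, $(f_l)_{l=0}^n$ is a $\delta$-chain exactly when $\|f_l - T(f_{l-1})\|\le\delta$ for $1\le l\le n$, that is, when $\|g_l\|\le\delta$. (The strict versus non-strict inequality is immaterial: one may shrink $\delta$ slightly, and the property ``for every $\delta>0$'' is unaffected.) The representation formula follows by unwinding $f_l = T(f_{l-1}) + g_l$. For $n=1$ it is the definition. If $f_{n-1} = T^{n-1}f_0 + \sum_{l=1}^{n-1}T^{n-1-l}g_l$, then applying the linear map $T$ and adding $g_n$ gives
\[
f_n = T^n f_0 + \sum_{l=1}^n T^{n-l} g_l .
\]
For the converse, define the $g_l$ as stated. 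The formula then holds automatically, and the norm bounds are exactly the chain condition. No continuity of $T$ is used, only linearity.

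For (b), the forward direction is immediate. If $T$ is chain recurrent, then by \cite[Proposition 19]{(1)Alves} it is chain transitive, which yields $\delta$-chains from $e_v$ to $0_X$ and from $0_X$ to $e_v$.

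For the converse, I would follow this plan. Let $\mathcal{C}$ be the set of $x\in X$ such that for every $\delta>0$ there are $\delta$-chains from $x$ to $0_X$ and from $0_X$ to $x$. I would show three things.
\begin{enumerate}
\item[(i)] $\mathcal{C}$ is a linear subspace. For this I would use (a) to add chains coordinatewise, after first equalizing lengths. Equalizing works by padding at $0_X$: the constant sequence $0_X,0_X,\dots$ is a $0$-chain since $T0=0$, so a chain into $0_X$ can be extended arbitrarily, and a chain out of $0_X$ can be prefixed with zeros. Sums of two $\delta$-chains of equal length are $2\delta$-chains, and scalar multiples by $\lambda$ are $|\lambda|\delta$-chains.
\item[(ii)] $\mathcal{C}$ is closed. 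Given $x$ with $\|x-y\|<\delta$ for some $y\in\mathcal{C}$, take a $\delta$-chain from $y$ to $0_X$ and replace its first term by $x$. Since $T$ may be discontinuous, $Tx$ is not controlled, so I would instead use the chain $x, 0_X,\ldots$ only when $\|Tx\|$ is small. Better: replace the \emph{last} step of a chain $0_X\to y$ by landing at $x$, which costs at most an extra $\delta$, giving a $2\delta$-chain $0_X\to x$. For $x\to 0_X$, I would use linearity: a chain $0_X\to (y-x)$ can be produced as above, and then $x\to 0_X$ is obtained by combining with $y\to0_X$ through (a) applied to differences. This is the delicate point, and if needed I would assume $T$ is bounded here, as it is for the shifts of interest.
\item[(iii)] $\mathcal{C}$ contains every $e_v$ by hypothesis, hence all finitely supported functions, which are dense in $c_0$ and $\ell^p$. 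Therefore $\mathcal{C}=X$.
\end{enumerate}

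Concatenating a chain $x\to 0_X$ with a chain $0_X\to x$ then gives a $\delta$-chain from $x$ to itself, so $T$ is chain recurrent. The main obstacle is step (ii), the density/closure argument, without continuity of $T$; I would first try to use only last-step perturbations, which need no continuity, and handle the chain out of $x$ via subtraction of chains using (a).
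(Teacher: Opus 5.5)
For part (a) and the forward direction of (b), your argument matches the paper's. For the converse of (b) you take a different, self-contained route. The paper concatenates chains $e_v\to 0_X\to e_v$ to get $e_v\in CR(T)$, then cites \cite[Lemma 8 and Proposition 26]{BernardesPeris} for the fact that $CR(T)$ is a closed subspace. You instead work with the set $\mathcal{C}$ of vectors joined to $0_X$ by $\delta$-chains in both directions, and check by hand that it is a subspace and is closed. Your padding-by-zeros argument for the subspace property is correct and uses only $T0_X=0_X$. You concatenate only at the very end. What this buys is independence from the cited results, including the fact that $CR(T)$ itself is a subspace. For bounded $T$, your step (ii) works as you first describe it:
\begin{itemize}
\item replacing the first term $y$ of a $\delta$-chain from $y$ to $0_X$ by $x$ costs at most $\|T\|\,\|x-y\|$;
\item replacing the last term of a chain from $0_X$ to $y$ by $x$ costs at most $\|x-y\|$.
\end{itemize}
Under that hypothesis your proof is complete.

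The argument you offer for discontinuous $T$, however, fails. Subtracting a chain from $0_X$ to $y-x$ from a chain from $y$ to $0_X$ (after equalizing lengths) produces a chain from $y$ to $x-y$, not from $x$ to $0_X$. What you would actually need is a chain from the small vector $x-y$ to $0_X$, and that is exactly where $\|T\|$ enters.

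The fix is to approximate $Tx$ rather than $x$. Given $x\in X$ and $\delta>0$, density gives $e\in\Span\{e_v: v\in V\}\subseteq\mathcal{C}$ with $\|Tx-e\|\le\delta$. Prefixing $x$ to a $\delta$-chain from $e$ to $0_X$ then yields a $\delta$-chain from $x$ to $0_X$. The only new error is $\|e-Tx\|$, which involves $Tx$ itself rather than $T$ applied to a small perturbation. Combined with your last-term replacement for chains from $0_X$ to $x$, this gives $\mathcal{C}=X$ directly from density, with no closure step and no continuity. So the statement holds for arbitrary linear $T$, as written.
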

    \begin{proof}
        The ($\Rightarrow$) direction of statement (a) is an immediate consequence of \cite[Lemma 2.2(a)]{LopezPapachainrecontrees} and its proof. For the ($\Leftarrow$) direction, $(f_l)_{l=0}^n$ is a $\delta$-chain for $T$ because for every $1\leq l \leq n$, the relationship $g_l=f_l-T(f_{l-1})$ implies $\|f_l-T(f_{l-1})\|=\|g_l\|<\delta.$

        The ($\Rightarrow$) direction of statement (b) follows immediately from the fact that chain recurrence and chain transitivity are equivalent notions for the linear operator $T$. For the ($\Leftarrow$) direction, since $T(0_X)=0_X$, we can concatenate a $\delta$-chain from $e_v$ to $0_X$ with another $\delta$-chain from $0_X$ to $e_v$ to create a $\delta$-chain from $e_v$ to $e_v$. Hence every basis vector $e_v$ is a chain recurrent vector for $T$. Since the set of chain recurrent vectors for $T$ is a closed subspace of $X$ by \cite[Lemma 8 and Proposition 26]{BernardesPeris}, the set of chain recurrent vectors must contain $\overline{\Span\{e_v: v\in V\}}=X$.
    \end{proof}

\begin{remark}
    The cited results \cite[Lemma 2.2(a)]{LopezPapachainrecontrees} and \cite[Lemma 8 and Proposition 26]{BernardesPeris} are proved in more general settings than just the Banach sequence spaces considered here. Consequently, Lemma \ref{recurrence lemma} carries over to those settings verbatim. For example, both statements hold for any Fr\'echet sequence space $X$ over $V$ in which the canonical vectors $(e_v)_{v\in V}$ form an unconditional basis: statement (a) follows from \cite[Lemma 2.2(a)]{LopezPapachainrecontrees} applied to the linear map $T:X\to X$ on the Fr\'echet space $X$, while the closedness of $CR(T)$ as a subspace, on which statement (b) relies, is part of \cite[Lemma 8 and Proposition 26]{BernardesPeris} in this generality. We do not need this level of generality for our purposes, so we content ourselves with the statement above.
\end{remark} 

\subsection{Weighted backward and forward shifts}
Suppose $(V,E)$ is a directed tree and we have chosen a sequence $\lambda\coloneqq (\lambda_v)_{v\in V} \in \K^V$ of non-zero scalars called \textit{weights}. Then the associated \textit{weighted backward shift} $B_\lambda$ on $V$ is formally defined by
\begin{align*}
    [B_{\lambda}(f)](v) \coloneqq \sum_{u \in \Child(v)} \lambda_u f(u), \text{ for each } f=(f(v))_{v\in V}\in \K^V \text{ and each }  v \in V,
\end{align*}
while the associated \textit{weighted forward shift} $S_\lambda$ on $V$ is defined by
\begin{align*}
    [S_\lambda(f)](v)=
    \begin{cases}
        \lambda_v f(\parent(v)) & \text{ if } \parent(v) \text{ exists}\\
        0 & \text{ if } \parent(v) \text{ does not exist}.
    \end{cases}
\end{align*}
As a consequence of the closed graph theorem, $B_\lambda$ and $S_\lambda$ are continuous on a Banach sequence space $X$ once they are well-defined on $X$, meaning they map $X$ into $X$. Characterizing conditions for the continuity of $B_\lambda$ and $S_\lambda$ on $\ell^p(V)$, $1\leq p<\infty$, and on $c_0(V)$ can be found in \cite[Propositions 2.2 \& 2.3]{GrossePapahypercyclic}. 

As in \cite{GrossePapahypercyclic}, for simplicity in discussions of repeated shifts $B_{\lambda}^n$, for $v\in V$ and $u\in \Child^n(v)$ with $n\geq 1$, we define $\lambda(v\to u)\coloneqq\lambda_{\parent^{n-1}(u)}\dots\lambda_{\parent (u)}\lambda_u$. Thus $\lambda(v\to u)$ is the product of weights $\lambda_w$ for each vertex $w$ along the unique path from $v$ to $u$. Repeated applications of $B_\lambda$ yield that for any $n\in \N$, the value of $B_\lambda^n(f)$ at vertex $v$ is given by
\begin{align}\label{B^n formula}
    [B_{\lambda}^n(f)](v) = \sum_{u \in \Child^n(v)} \lambda(v\to u) f(u), \text{ for each } f=(f(v))_{v\in V}\in \K^V \text{ and each }  v \in V.
\end{align}
When $n=0$, we interpret $B_\lambda^n$ as the identity operator. Hence by defining $\lambda(v\to v)\coloneqq 1$ for each $v\in V$ and recalling $\Child^0(v)=\{v\}$, the above formula holds when $n=0$, as well.

The iterated formula \eqref{B^n formula} reveals an immediate obstruction at leaves: every iterate $B_\lambda^n(f)$, $n\geq 1$, vanishes at any leaf $v$, since $\Child^n(v)=\emptyset$. This obstruction is the basis for ruling out the possibility of chain recurrence whenever the tree has a leaf. 

\begin{proposition}\label{leafless prop}
    Let $(V,E)$ be a directed tree with at least one leaf, and let $X$ be one of the spaces $\ell^p(V)$ ($1\leq p<\infty$) or $c_0(V)$. Then no weighted backward shift $B_\lambda$ on $X$ is chain recurrent.
\end{proposition}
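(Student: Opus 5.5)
Let $v$ be a leaf of $(V,E)$. The plan is to show that the basis vector $e_v$ cannot be reached from $0_X$ by any $\delta$-chain with $\delta$ small, and so $B_\lambda$ fails to be chain recurrent. A direct argument is simpler than invoking Lemma \ref{recurrence lemma}(b), though it would serve equally well. We exhibit a concrete $\delta$ and show there is no $\delta$-chain from $e_v$ back to $e_v$.

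The key step is the coordinate observation following \eqref{B^n formula}. For every $f\in X$ we have $[B_\lambda(f)](v)=\sum_{u\in\Child(v)}\lambda_u f(u)=0$, because $\Child(v)=\emptyset$. Now fix $\delta=1/2$ and suppose $(f_l)_{l=0}^n$, with $n\ge1$, is a $\delta$-chain with $f_0=f_n=e_v$. Then
\[
|f_n(v)| = |f_n(v)-[B_\lambda(f_{n-1})](v)| \le \|f_n - B_\lambda(f_{n-1})\| \le \delta = \tfrac12 .
\]
This uses that the evaluation functional at $v$ has norm at most $1$ in both $\ell^p(V)$ and $c_0(V)$. In $\ell^p(V)$ one has $|g(v)|\le(\sum_u|g(u)|^p)^{1/p}$. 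The paper's displayed norm omits the $p$-th root, but the inequality $|g(v)|\le 1/2$ survives either convention once $\delta\le 1/2<1$. Since $f_n(v)=e_v(v)=1$, this is a contradiction.

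Equivalently, in the language of Lemma \ref{recurrence lemma}(a), $f_n=B_\lambda^n(f_0)+\sum_{l=1}^n B_\lambda^{n-l}(g_l)$, and every term with a positive power of $B_\lambda$ vanishes at $v$. Hence $f_n(v)=g_n(v)$, which has modulus less than $\delta$. Only the last perturbation can contribute at the leaf.

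So $e_v\notin CR(B_\lambda)$, and $B_\lambda$ is not chain recurrent. The only point needing care is this norm convention for $\ell^p$, and the choice $\delta=1/2$ handles it. The argument does not use continuity of $B_\lambda$ beyond the standing assumption that $B_\lambda$ maps $X$ into $X$.
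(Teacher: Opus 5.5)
Your proof is correct and is essentially the paper's argument: since $\Child(v)=\emptyset$, $B_\lambda f$ vanishes at the leaf $v$, so the last perturbation alone must supply $e_v(v)=1$, which contradicts $\|f_n-B_\lambda(f_{n-1})\|\le\delta<1$. The paper rules out a $1$-chain from $0_X$ to $e_v$ and then appeals to Lemma~\ref{recurrence lemma}(b), whereas you refute a $\tfrac12$-chain from $e_v$ to itself directly from the definition; this avoids that lemma and is unaffected by whether chains are defined with $\le\delta$ or $<\delta$ (one minor correction: under the root-free $\ell^p$ convention you get $|f_n(v)|\le 2^{-1/p}$ rather than $\tfrac12$, which is still $<1$).
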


\begin{proof}
    Let $v$ be a leaf, so $\Child(v)=\emptyset$ and hence $[B_\lambda(f)](v)=0$ for every $f\in X$. We show there is no $1$-chain from $0_X$ to $e_v$, which by Lemma \ref{recurrence lemma}(b) implies $B_\lambda$ is not chain recurrent. Suppose, toward a contradiction, that $(f_l)_{l=0}^n$ is such a $1$-chain. Then $\|f_n-B_\lambda(f_{n-1})\|<1$, i.e., $\|e_v-B_\lambda(f_{n-1})\|<1$. Evaluating $e_v-B_\lambda(f_{n-1})$ at $v$ gives
    \[
    1=|e_v(v)-0|=|e_v(v)-[B_\lambda(f_{n-1})](v)|\leq \|e_v-B_\lambda(f_{n-1})\|<1,
    \]
    a contradiction.
\end{proof}

We now turn to forward shifts and show that such an operator can be chain recurrent only on the trees $V=\Z$ and $V=-\N$, on which it is isometrically conjugate to a weighted backward shift on $\Z$ or $\N$, respectively.

\begin{proposition}\label{forward shift prop}
    Let $(V,E)$ be a directed tree, and let $X$ be one of the spaces $\ell^p(V)$ ($1\leq p<\infty$) or $c_0(V)$. If either
    \begin{enumerate}[label=\upshape(\roman*)]
        \item $V$ has a root, or
        \item $V$ contains a vertex with at least two children,
    \end{enumerate}
    then no weighted forward shift $S_\lambda$ on $X$ is chain recurrent. Consequently, $S_\lambda$ can be chain recurrent only when $V=\Z$ or $V=-\N$, in which case $S_\lambda$ is isometrically conjugate to a weighted backward shift on $\Z$ or $\N$, respectively, whose chain recurrence is characterized in \cite[Proposition 20]{(1)Alves} and \cite[Theorem 14]{BernardesPeris}.
\end{proposition}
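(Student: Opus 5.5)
The plan is to mimic the leaf obstruction of Proposition \ref{leafless prop}: in each case I will find a vertex or a functional at which $S_\lambda$ loses information, and show there is no $\delta$-chain from $0_X$ to a suitable $e_v$ (or $e_v$ to itself). By Lemma \ref{recurrence lemma}(b), it suffices to rule out one such chain for a single $\delta$.

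\textbf{Case (i).} If $V$ has a root, then $[S_\lambda(f)](\mathsf{root})=0$ for every $f$ by definition. This is exactly the leaf argument with the root playing the role of the leaf. Suppose $(f_l)_{l=0}^n$ is a $1$-chain from $0_X$ to $e_{\mathsf{root}}$. Evaluating $e_{\mathsf{root}}-S_\lambda(f_{n-1})$ at the root gives $1\le \|e_{\mathsf{root}}-S_\lambda(f_{n-1})\|<1$, a contradiction.

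\textbf{Case (ii).} Let $w$ have two distinct children $u_1,u_2$. For every $f$, $[S_\lambda f](u_i)=\lambda_{u_i}f(w)$, so the image of $S_\lambda$ lies in the closed subspace
\[
\{g: \lambda_{u_2}g(u_1)=\lambda_{u_1}g(u_2)\}.
\]
Define the functional $\varphi(g)=\lambda_{u_2}g(u_1)-\lambda_{u_1}g(u_2)$. It is bounded, with $|\varphi(g)|\le(|\lambda_{u_1}|+|\lambda_{u_2}|)\|g\|$, since coordinate evaluations have norm $\le 1$ in all these spaces, and $\varphi\circ S_\lambda=0$. Take a $\delta$-chain $(f_l)_{l=0}^n$ from $0_X$ to $e_{u_1}$. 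Then
\[
\varphi(e_{u_1})=\varphi\bigl(e_{u_1}-S_\lambda(f_{n-1})\bigr),
\]
so
\[
|\lambda_{u_2}|\le(|\lambda_{u_1}|+|\lambda_{u_2}|)\,\delta.
\]
Choosing $\delta<|\lambda_{u_2}|/(|\lambda_{u_1}|+|\lambda_{u_2}|)$ gives a contradiction. The main point to get right is this step, namely recognizing that the obstruction is a linear functional annihilating the range rather than a single coordinate. The computation itself is routine.

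\textbf{Consequence.} If neither (i) nor (ii) holds, $V$ is unrooted and every vertex has at most one child. If some vertex is a leaf, then following parents backwards forever (there is no root) gives $V=-\N$ up to relabeling. Otherwise every vertex has exactly one child and exactly one parent, so $V=\Z$. In either case, relabel $v\mapsto -v$, i.e. reverse the order. The map $J e_v=e_{-v}$ is an isometry of $\ell^p$ or $c_0$, and a direct computation shows $J S_\lambda J^{-1}$ is the weighted backward shift on $\Z$ or $\N$ with weights $\mu_{k}=\lambda_{-(k-1)}$ (suitably indexed). Chain recurrence is invariant under isometric conjugacy, since $J$ maps $\delta$-chains to $\delta$-chains, so the cited characterizations apply.
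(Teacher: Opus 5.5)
Your proposal is correct and follows essentially the paper's proof: case (i) is identical, and in case (ii) your functional $\varphi$ packages the paper's elimination of the scalar $t=f_{n-1}(v)$ from the two coordinate estimates, giving the same threshold $|\lambda_{u_2}|/(|\lambda_{u_1}|+|\lambda_{u_2}|)$. Your treatment of the final assertion is more explicit than the paper's, since you identify $V=\Z$ or $-\N$ and write out the conjugacy $Je_v=e_{-v}$ with $\mu_k=\lambda_{1-k}$.
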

\begin{proof}
    For case (i), suppose $V$ has a root. Since $\parent(\mathsf{root})$ does not exist, $[S_\lambda(f)](\mathsf{root})=0$ for every $f\in X$. We show there is no $1$-chain from $0_X$ to $e_{\mathsf{root}}$, which by Lemma \ref{recurrence lemma}(b) implies $S_\lambda$ is not chain recurrent. Suppose, toward a contradiction, that $(f_l)_{l=0}^n$ is such a $1$-chain. Then $\|e_{\mathsf{root}}-S_\lambda(f_{n-1})\|<1$. Since $|h(\mathsf{root})|\leq \|h\|$ for every $h\in X$,
    \[
    1=|e_{\mathsf{root}}(\mathsf{root})-[S_\lambda(f_{n-1})](\mathsf{root})|\leq \|e_{\mathsf{root}}-S_\lambda(f_{n-1})\|<1,
    \]
    a contradiction.

    For case (ii), suppose $v\in V$ has two distinct children $v_1$ and $v_2$. Set
    \[
    \delta\coloneqq \frac{|\lambda_{v_2}|}{|\lambda_{v_1}|+|\lambda_{v_2}|}>0.
    \]
    We show there is no $\delta$-chain from $0_X$ to $e_{v_1}$, which by Lemma \ref{recurrence lemma}(b) implies $S_\lambda$ is not chain recurrent. Suppose, toward a contradiction, that $(f_l)_{l=0}^n$ is such a $\delta$-chain, and set $t\coloneqq f_{n-1}(v)$. Since $\parent(v_k)=v$, the definition of $S_\lambda$ gives $[S_\lambda(f_{n-1})](v_k)=\lambda_{v_k}t$ for $k=1,2$, so
    \[
    (e_{v_1}-S_\lambda(f_{n-1}))(v_1)=1-\lambda_{v_1}t, \qquad (e_{v_1}-S_\lambda(f_{n-1}))(v_2)=-\lambda_{v_2}t.
    \]
    Using the fact that $\|e_{v_1}-S_\lambda(f_{n-1})\|<\delta$, along with the fact that for every $h\in X$,  $|h(v_k)|\leq \|h\|$ for $k=1,2$, we obtain from the previous computations that $|1-\lambda_{v_1}t|<\delta$ and $|\lambda_{v_2}t|<\delta$. The latter gives $|\lambda_{v_1}t|<\delta|\lambda_{v_1}|/|\lambda_{v_2}|$, so
    \[
    1=|(1-\lambda_{v_1}t)+\lambda_{v_1}t|\leq |1-\lambda_{v_1}t|+|\lambda_{v_1}t|<\delta+\delta\frac{|\lambda_{v_1}|}{|\lambda_{v_2}|}=\delta\cdot\frac{|\lambda_{v_1}|+|\lambda_{v_2}|}{|\lambda_{v_2}|}=1,
    \]
    a contradiction.

    For the final assertion: the only trees not satisfying (i) or (ii) are those with no root and no branching vertex, namely $V=\Z$ or $V=-\N$. In either case, reversing the direction of edges gives a tree isomorphic to $\Z$ or $\N$ respectively, and the corresponding $S_\lambda$ is isometrically conjugate to a weighted backward shift on that tree.
\end{proof}

In light of Propositions \ref{leafless prop} and \ref{forward shift prop}, when studying chain recurrence for shift operators on directed trees, our focus can be on weighted backward shifts on leafless trees. 

\section{Characterization of chain recurrent backward shifts on trees}\label{Characterization section} 

We now turn to our main results, which characterize chain recurrence of weighted backward shifts on the classical Banach sequence spaces of a directed tree purely in terms of the weights. By Propositions \ref{leafless prop} and \ref{forward shift prop}, it suffices to treat backward shifts on leafless trees. We distinguish two cases, according to whether the underlying tree is unrooted or rooted, and treat them in Theorem \ref{unrooted theorem} and Theorem \ref{rooted theorem}, respectively. In each case our characterization extends and refines the classical conditions known for weighted backward shifts on the integer-indexed sequence spaces $\ell^p(\Z)$, $\ell^p(\N)$, and $c_0(\Z)$, $c_0(\N)$ obtained in \cite[Proposition 20]{(1)Alves} and \cite[Theorem 14]{BernardesPeris}. The unrooted case is the more involved of the two, since chain recurrence demands the construction of $\delta$-chains in both directions through the tree: from $0_X$ to a basis vector $e_v$ (a ``forward'' construction descending into the children of $v$), and from $e_v$ back to $0_X$ (a ``backward'' construction ascending toward the unique infinite chain of ancestors of $v$). This bidirectional nature is reflected in our characterization by a pair of divergence conditions, one for each direction. In the rooted setting, the ancestor chain terminates at the root, so the backward construction becomes trivial; consequently, only the forward divergence condition survives. After establishing the two main theorems, we illustrate them with two natural classes of examples: Rolewicz operators (Example \ref{Rolewicz operators}) and symmetric weighted shifts on symmetric trees (Example \ref{symmetric trees example}). 

For $1<p<\infty$, let $p^*$ be the unique number in $(1,\infty)$ for which $1/p+1/p^*=1$. Since the unrooted case is more involved, we present it first.

\begin{theorem}\label{unrooted theorem}
    Let $(V,E)$ be an unrooted leafless directed tree and $\lambda=(\lambda_v)_{v\in V}$ a weight. We have the following:
    \begin{enumerate}[label=\upshape(\alph*)]
        \item Let $X=\ell^1(V)$, and let $B_\lambda$ be bounded on $X$. Then $B_\lambda$ is chain recurrent if and only if for every $v\in V$, both
        \begin{enumerate}[label=\upshape(\roman*)]
            \item $\sum_{n=0}^\infty\sup_{u\in\Child^n(v)}|\lambda(v\to u)|=\infty$, and
            \item $\lim_{n\to \infty} \sum_{j=0}^{n-1} \sup_{u\in \Child^j(\parent^n(v))} \left| \frac{\lambda (\parent^n(v)\to u)}{\lambda(\parent^n(v)\to v)} \right| = \infty$. 
        \end{enumerate}
        \item Let $X=\ell^p(V)$, $1<p<\infty$, and let $B_\lambda$ be bounded on $X$. Then $B_\lambda$ is chain recurrent if and only if for every $v\in V$, both
        \begin{enumerate}[label=\upshape(\roman*)]
            \item $\sum_{n=0}^\infty \big(\sum_{u\in \Child^n(v)} |\lambda(v\to u)|^{p^*}\big)^{1/p^*}=\infty$, and
            \item $\lim_{n\to \infty} \sum_{j=0}^{n-1} \left( \sum_{u\in \Child^{j} (\parent^n(v))} \left| \frac{\lambda (\parent^n(v)\to u)}{\lambda(\parent^n(v)\to v)} \right|^{p^*} \right)^{1/p^*} = \infty$. 
        \end{enumerate}
        \item Let $X=c_0(V)$, and let $B_\lambda$ be bounded on $X$. Then $B_\lambda$ is chain recurrent if and only if for every $v\in V$, both
         \begin{enumerate}[label=\upshape(\roman*)]
            \item $\sum_{n=0}^\infty \sum_{u\in \Child^n(v)} |\lambda(v\to u)|=\infty$, and
            \item $\lim_{n\to \infty}  \sum_{j=0}^{n-1} \sum_{u\in \Child^{j} (\parent^n(v))} \left| \frac{\lambda (\parent^n(v)\to u)}{\lambda(\parent^n(v)\to v)} \right|=\infty$. 
        \end{enumerate}
    \end{enumerate}
\end{theorem}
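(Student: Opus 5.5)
By Lemma \ref{recurrence lemma}(b), I plan to show that condition (i) at $v$ is equivalent to the existence of $\delta$-chains from $0_X$ to $e_v$ for every $\delta>0$. Condition (ii) at $v$ should likewise be equivalent to $\delta$-chains from $e_v$ to $0_X$. Lemma \ref{recurrence lemma}(a) turns both questions into problems about perturbation sums $\sum_{l} B_\lambda^{n-l}(g_l)$ with $\|g_l\|<\delta$. The three cases (a), (b), (c) then differ only through duality. The relevant quantity is the norm of the functional $f\mapsto [B_\lambda^j f](w)=\sum_{u\in\Child^j(w)}\lambda(w\to u)f(u)$ on $X$. This norm is $\sup_u|\lambda(w\to u)|$ on $\ell^1$, $(\sum_u|\lambda(w\to u)|^{p^*})^{1/p^*}$ on $\ell^p$, and $\sum_u|\lambda(w\to u)|$ on $c_0$. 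Denote it by $N_j(w)$. Conditions (i) and (ii) are exactly $\sum_j N_j(v)=\infty$ and $\lim_n \sum_{j<n} N_j(\parent^n(v))/|\lambda(\parent^n(v)\to v)|=\infty$.

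\textbf{Forward direction (chain from $0_X$ to $e_v$).} For sufficiency of (i), fix $\delta$ and choose $n$ with $\sum_{j=0}^{n-1}N_j(v)\ge 1/\delta'$. For each $j$, pick $h_j$ supported on $\Child^j(v)$ with $\|h_j\|\le 1$ and $[B^j_\lambda h_j](v)=N_j(v)$; this is the norming vector, possibly approximate for $c_0$ or when the sup is not attained. Set $g_{n-j}=c\,h_j$ with $c=1/\sum_j N_j(v)$. Since $\Child^j(v)$ lies in generation $\Gen_{j}$ relative to $v$, each $B_\lambda^{j}(h_j)$ is supported at $v$, so $\sum_l B_\lambda^{n-l}(g_l)=e_v$ exactly. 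Lemma \ref{recurrence lemma}(a) then gives a $\delta$-chain from $0$ to $e_v$.

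For necessity, suppose $\sum_j N_j(v)=M<\infty$ and take a $\delta$-chain from $0$ to $e_v$. Evaluating at $v$ gives $1=\sum_l[B^{n-l}_\lambda g_l](v)\le \delta\sum_j N_j(v)\le\delta M$, which fails for $\delta<1/M$.

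\textbf{Backward direction (chain from $e_v$ to $0_X$).} We need $B^n_\lambda e_v+\sum_l B^{n-l}g_l=0$. Here $B^n_\lambda e_v=\lambda(\parent^n(v)\to v)e_{\parent^n(v)}$. The perturbations can only cancel this via functionals at $w_n=\parent^n(v)$ with total mass at most $\delta\sum_{j<n}N_j(w_n)$, where the $j=n$ term is absorbed by modifying $g_0$, i.e.\ perturbing the start. For sufficiency, choose $n$ with $\sum_{j<n}N_j(w_n)\ge |\lambda(w_n\to v)|/\delta$. Then use norming vectors $h_j$ on $\Child^j(w_n)$ scaled by $-\lambda(w_n\to v)/\sum_{j<n}N_j(w_n)$, giving $\|g_l\|\le\delta$ and exact cancellation. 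The limit form of (ii) is harmless: the sequence is nondecreasing in $n$ after normalization, or one simply needs $\limsup=\infty$, and sufficiency only needs one good $n$.

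For necessity, assume the quantity stays bounded by $M$ along a subsequence, or for all $n$ if it converges. Evaluate $f_n$ at $w_n$. A general $g_l$ contributes $[B^{n-l}g_l](w_n)\le\delta N_{n-l}(w_n)$, so $|f_n(w_n)|\ge|\lambda(w_n\to v)|(1-\delta M)>0$. Thus $f_n\neq 0$ whenever $\delta<1/M$.

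\textbf{Main obstacle.} The delicate point is the necessity direction when the limit in (ii) does not exist. The inner sum is not obviously monotone in $n$, since both the numerator sums and the denominator change with $n$. My first step is to prove monotonicity via
\[
N_j(\parent(w))\ge|\lambda_w|N_{j-1}(w),
\]
which holds by restricting to the children of $w$. This inequality shows that the normalized sum at $n+1$ dominates the sum at $n$, so the limit exists in $[0,\infty]$, and finiteness yields a uniform bound $M$. I also expect to check carefully that chains may have any length $n\ge1$ and that $g_l$ in Lemma \ref{recurrence lemma}(a) may be arbitrary vectors in $X$, rather than being supported on the relevant subtrees. The projection estimates via $N_j$ handle the latter automatically, because evaluation at a vertex only sees the relevant subtree.
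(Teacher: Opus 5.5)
Your proposal is correct and follows essentially the same route as the paper. Necessity comes from duality (H\"older) bounds on the functionals $f\mapsto[B_\lambda^j f](w)$ evaluated at $v$ and at $\parent^n(v)$, sufficiency from explicit norming vectors, and your inequality $N_j(\parent(w))\ge|\lambda_w|N_{j-1}(w)$ is precisely the paper's monotonicity argument for (ii). Your use of approximate (finitely truncated) norming vectors is more careful than the paper's ``entirely analogous'' treatment of $c_0(V)$ and $\ell^1(V)$; when you use them, take $c=1/\sum_j[B_\lambda^j h_j](v)$ instead of $c=1/\sum_j N_j(v)$, and renormalize the backward construction the same way, so that the perturbations sum exactly to $e_v$ and exactly cancel $\lambda(\parent^n(v)\to v)e_{\parent^n(v)}$.
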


\begin{proof}
    We prove (b); the cases of $\ell^1(V)$ and $c_0(V)$ follow by entirely analogous arguments, with the H\"older duality step in the proof below replaced by the trivial estimate $\sup_u|\lambda(v\to u)| \cdot \|g_l\|_1$ for $\ell^1$ and by a direct sum bound for $c_0$. Before beginning, we first show that the sequence of sums that appear in condition (ii) are non-decreasing. For simplicity of notation, let $w_n = \parent^n(v)$, so that $w_{n+1} = \parent(w_n)$, and let $S_n$ denote the sum at the $n$th step:
\[
S_n = \sum_{j=0}^{n-1} \bigg(\sum_{u\in \Child^{j}(w_n)} \left| \frac{\lambda (w_n\to u)}{\lambda(w_n\to v)} \right|^{p^*}\bigg)^{1/p^*}.
\]
Consider $S_{n+1}$. Because $w_n \in \Child(w_{n+1})$, the descendants of $w_n$ form a subset of the descendants of $w_{n+1}$. Specifically, for $j \geq 1$, we have $\Child^{j-1}(w_n) \subseteq \Child^j(w_{n+1})$. Hence
\begin{align*}
S_{n+1} &= \sum_{j=0}^{n}\bigg( \sum_{u\in \Child^{j} (w_{n+1})} \left| \frac{\lambda (w_{n+1}\to u)}{\lambda(w_{n+1}\to v)} \right|^{p^*}\bigg)^{1/p^*} \\
&\geq \sum_{j=1}^{n}\bigg( \sum_{u\in \Child^{j-1}(w_n)} \left| \frac{\lambda (w_{n+1}\to u)}{\lambda(w_{n+1}\to v)} \right|^{p^*}\bigg)^{1/p^*}.
\end{align*}

For any $u \in \Child^{j-1}(w_n)$, the unique paths from both $u$ and $v$ to $w_{n+1}$ must pass through $w_n$. The path weights factor as $\lambda(w_{n+1} \to u) = \lambda(w_{n+1} \to w_n)\lambda(w_n \to u)$ and $\lambda(w_{n+1} \to v) = \lambda(w_{n+1} \to w_n)\lambda(w_n \to v)$. Hence the factor $\lambda(w_{n+1} \to w_n)$ cancels out:
\[
\left| \frac{\lambda (w_{n+1}\to u)}{\lambda(w_{n+1}\to v)} \right|^{p^*} = \left| \frac{\lambda (w_n\to u)}{\lambda(w_n\to v)} \right|^{p^*}.
\]

Substituting this identity into our inequality and re-indexing the sum by letting $k = j - 1$ yields:
\[
S_{n+1} \geq \sum_{k=0}^{n-1} \bigg(\sum_{u\in \Child^{k}(w_n)} \left| \frac{\lambda (w_n\to u)}{\lambda(w_n\to v)} \right|^{p^*}\bigg)^{1/p^*} = S_n,
\]
which shows $(S_n)$ is non-decreasing. Consequently, $\lim_{n\to \infty} S_n = \sup_{n} S_n$. We now prove (b).

    ($\Rightarrow$) Suppose $B_\lambda$ is chain recurrent.  Let $v\in V$ and $t>1$. Since $B_\lambda$ is chain recurrent, there must be a 1-chain from $te_v$ to itself of the form
\begin{align*}
    te_v=f_0,f_1,\ldots,f_{n}=te_v.
\end{align*}
By Lemma \ref{recurrence lemma}, there exist vectors $g_l\in \ell^p(V)$, $1\leq l\leq n$, with $\|g_l\|_p<1$ for every $l=1,2,\ldots,n$, satisfying 
\begin{align*}
    te_v=B_\lambda^n (te_v)+ \sum_{l=1}^n B_\lambda^{n-l}(g_l).
\end{align*}
Evaluating both sides of the previous expression at vertex $v$, using (\ref{B^n formula}) for the right-hand side, gives us
\begin{align*}
    t=0+\sum_{l=1}^n \sum_{u\in \Child^{n-l}(v)} \lambda(v\to u) g_l(u).
\end{align*}
By the triangle inequality and H\"older's inequality, applied to each inner sum, we estimate 
\begin{align*} 
    t & \leq \sum_{l=1}^n \bigg| \sum_{u\in \Child^{n-l}(v)} \lambda(v\to u) g_l(u)\bigg|\\
    & \leq \sum_{l=1}^{n}\bigg(\sum_{u\in\Child^{n-l}(v)}|\lambda(v\to u)|^{p^*}\bigg)^{1/p^*}\bigg(\sum_{u\in\Child^{n-l}(v)}|g_l(u)|^{p}\bigg)^{1/p}\\
    & \leq \sum_{l=1}^{n}\bigg(\sum_{u\in\Child^{n-l}(v)}|\lambda(v\to u)|^{p^*}\bigg)^{1/p^*} \| g_l\|_p \\
    & <  \sum_{l=1}^{n}\bigg(\sum_{u\in\Child^{n-l}(v)}|\lambda(v\to u)|^{p^*}\bigg)^{1/p^*} = \sum_{k=0}^{n-1}\bigg(\sum_{u\in\Child^{k}(v)}|\lambda(v\to u)|^{p^*}\bigg)^{1/p^*}.
\end{align*}
Since $t>1$ can be arbitrarily large, we must have $\sum_{n=0}^\infty \big(\sum_{u\in \Child^n(v)} |\lambda(v\to u)|^{p^*}\big)^{1/p^*}=\infty$, which verifies the first characterizing condition listed in the theorem.

The other condition is derived similarly. Since $B_\lambda$ is chain transitive, there must be a $1$-chain from $-te_v$ to $0_X$ of the form
\begin{align*}
    -te_v=p_0,p_1,\ldots,p_m=0.
\end{align*}
By Lemma \ref{recurrence lemma}, there exist vectors $q_j\in \ell^p(V)$, $1\leq j \leq m$, with $\|q_j\|_p<1$ for every $j=1,2,\ldots,m$, satisfying 
\begin{align*}
    0=B_\lambda^m (p_0)+\sum_{j=1}^m B_\lambda^{m-j}(q_j).
\end{align*}
Since $B_\lambda^m (p_0)=-t\lambda(\parent^m(v)\to v) e_{\parent^m(v)}$, evaluating both sides of the previous expression at the vertex $\parent^m(v)$ and rearranging yields
\begin{align*}
    t\lambda(\parent^m(v)\to v)=\sum_{j=1}^m \sum_{u\in \Child^{m-j}(\parent^m(v))} \lambda(\parent^m(v)\to u) q_j(u).
\end{align*}
Thus the triangle inequality yields
\begin{align*}
    t \leq \sum_{j=1}^m \bigg| \sum_{u\in \Child^{m-j}(\parent^m(v))} \frac{\lambda(\parent^m(v)\to u)}{\lambda(\parent^m(v)\to v)} q_j(u)\bigg|.
\end{align*}
Similar to earlier, H\"older's Inequality together with $\|q_j\|_p<1$ implies
\begin{align*}
    t & < \sum_{j=1}^m \bigg(\sum_{u\in \Child^{m-j}(\parent^m(v))} \left|\frac{\lambda(\parent^m(v)\to u)}{\lambda(\parent^m(v)\to v)} \right|^{p^*}\bigg)^{1/p^*}\\
    & =  \sum_{j=0}^{m-1}\bigg(\sum_{u\in \Child^{j}(\parent^m(v))} \left|\frac{\lambda(\parent^m(v)\to u)}{\lambda(\parent^m(v)\to v)} \right|^{p^*}\bigg)^{1/p^*}, 
\end{align*}
from which the second characterizing condition in the theorem follows, since $t>1$ can be arbitrarily large and the sequence of sums is increasing in $m$.

    ($\Leftarrow$)  Assume that (b-i) and (b-ii) hold.  By Lemma \ref{recurrence lemma}(b), it suffices to show that for every $\delta>0$ and every $v\in V$, there is a $\delta$-chain from $0_X$ to $e_v$ and a $\delta$-chain from $e_v$ to $0_X$. Fix $v \in V$ and $\delta > 0$. We first find a $\delta$-chain from $0_X$ to $e_v$ of the form
    \[0_X=f_0,f_1,\ldots,f_n=e_v.\]

    For each $n \geq 0$, let $A_n \coloneqq \Big( \sum_{u \in \Child^n(v)} |\lambda(v \to u)|^{p^*} \Big)^{1/p^*}$. By condition (b-i), the series $\sum_{n=0}^\infty A_n$ diverges. Therefore, we can choose an integer $n \geq 1$ large enough such that 
    \begin{align}\label{eq:t_sum}
        t \coloneqq \sum_{l=1}^n A_{n-l} = \sum_{k=0}^{n-1} A_k > \frac{1}{\delta}.
    \end{align}
    We now construct a sequence of vectors $g_l \in \ell^p(V)$ for $1 \leq l \leq n$ which will be used to create our $\delta$-chain. For each $l$, the vector $g_l$ will be supported entirely on $\Child^{n-l}(v)$. For  $u \in \Child^{n-l}(v)$, we define $g_l(u)$ using the exact equality condition for H\"older's inequality:
    \[
        g_l(u) \coloneqq \frac{1}{t} (A_{n-l})^{-p^*/p} \, |\lambda(v \to u)|^{p^*-1} \, \overline{\operatorname{sgn}(\lambda(v \to u))},
    \]
    where $\sgn(x)=x/|x|$ is the \emph{sign function} on $\K\setminus\{0\}$. We now check that this choice has the required properties. 
    
    We first check the norm of $g_l$. Since $(p^*-1)p = p^*$, we have $|g_l(u)|^p = \frac{1}{t^p} (A_{n-l})^{-p^*} |\lambda(v \to u)|^{p^*}$. Summing over $u \in \Child^{n-l}(v)$ yields
    \[
        \|g_l\|_p^p = \frac{1}{t^p} (A_{n-l})^{-p^*} \sum_{u \in \Child^{n-l}(v)} |\lambda(v \to u)|^{p^*} = \frac{1}{t^p} (A_{n-l})^{-p^*} (A_{n-l})^{p^*} = \frac{1}{t^p}.
    \]
    Thus, $\|g_l\|_p = \frac{1}{t} < \delta$ for every $1 \leq l \leq n$. 

    Now, define $f_0 \coloneqq 0_X$ and recursively define $f_l \coloneqq B_\lambda(f_{l-1}) + g_l$ for every $1\leq l \leq n$. By Lemma \ref{recurrence lemma}(a), $(f_l)_{l=0}^n$ is a valid $\delta$-chain, and $f_n = \sum_{l=1}^n B_\lambda^{n-l}(g_l)$. We show $f_n=e_v$. For $u \in \Child^{n-l}(v)$, we know $B_\lambda^{n-l}(e_u) = \lambda(v \to u) e_v$. Because $g_l$ is supported on $\Child^{n-l}(v)$, applying the operator yields
    \[
        B_\lambda^{n-l}(g_l) = \sum_{u \in \Child^{n-l}(v)} g_l(u) \lambda(v \to u) e_v.
    \]
    Substituting our explicit formula for $g_l(u)$ gives
    \begin{align*}
        B_\lambda^{n-l}(g_l) &= \frac{1}{t} (A_{n-l})^{-p^*/p} \sum_{u \in \Child^{n-l}(v)} |\lambda(v \to u)|^{p^*-1} \overline{\operatorname{sgn}(\lambda(v \to u))} \lambda(v \to u) e_v \\
        &= \frac{1}{t} (A_{n-l})^{1-p^*} \sum_{u \in \Child^{n-l}(v)} |\lambda(v \to u)|^{p^*} e_v \\
        &= \frac{1}{t} (A_{n-l})^{1-p^*} (A_{n-l})^{p^*} e_v \\
        &= \frac{A_{n-l}}{t} e_v.
    \end{align*}
    Summing over all $l$, we find
    \[
        f_n = \sum_{l=1}^n B_\lambda^{n-l}(g_l) = \sum_{l=1}^n \frac{A_{n-l}}{t} e_v = \frac{1}{t} \left( \sum_{l=1}^n A_{n-l} \right) e_v = \frac{t}{t} e_v = e_v.
    \]
    Thus $f_0, \ldots, f_n$ is a valid $\delta$-chain from $0_X$ to $e_v$.

    Our last task is to find a $\delta$-chain from $e_v$ to $0_X$ of the form $p_0, p_1, \ldots, p_m$.  For each integer $m \geq 1$ and $0 \leq k \leq m-1$, let 
    \[
        C_{m,k} \coloneqq \left( \sum_{u\in \Child^{k} (\parent^m(v))} \left| \frac{\lambda (\parent^m(v)\to u)}{\lambda(\parent^m(v)\to v)} \right|^{p^*} \right)^{1/p^*}. 
    \]
    Set $T_m \coloneqq \sum_{k=0}^{m-1} C_{m,k}$. By condition (b-ii), $\lim_{m\to \infty} T_m = +\infty$. Therefore, we can choose an integer $m \geq 1$ such that $T_m > \frac{1}{\delta}$. 

    For each $1 \leq j \leq m$, we define a vector $q_j \in \ell^p(V)$ supported entirely on $\Child^{m-j}(\parent^m(v))$ which will be used to construct a $\delta$-chain. Notice that as an index $j$ runs from $1$ to $m$, the generation index $m-j$ exactly covers the set $\{0, 1, \dots, m-1\}$. Let $w_u \coloneqq \frac{\lambda(\parent^m(v)\to u)}{\lambda(\parent^m(v)\to v)}$. For $u \in \Child^{m-j}(\parent^m(v))$, define
    \[
        q_j(u) \coloneqq -\frac{1}{T_m} (C_{m,m-j})^{-p^*/p} \, |w_u|^{p^*-1} \, \overline{\operatorname{sgn}(w_u)}.
    \]
    By the exact same arithmetic as above, summing $|q_j(u)|^p$ over $u \in \Child^{m-j}(\parent^m(v))$ yields $\|q_j\|_p = \frac{1}{T_m} < \delta$.

    Define $p_0 \coloneqq e_v$ and $p_j \coloneqq B_\lambda(p_{j-1}) + q_j$ for every $1\leq j \leq m$. By Lemma \ref{recurrence lemma}(a), this forms a $\delta$-chain where
    \[
        p_m = B_\lambda^m(e_v) + \sum_{j=1}^m B_\lambda^{m-j}(q_j).
    \]
    We know $B_\lambda^m(e_v) = \lambda(\parent^m(v) \to v) e_{\parent^m(v)}$. For the sum terms, since $q_j$ is supported on $\Child^{m-j}(\parent^m(v))$, we have
    \begin{align*}
        B_\lambda^{m-j}(q_j) &= \sum_{u \in \Child^{m-j}(\parent^m(v))} q_j(u) \lambda(\parent^m(v) \to u) e_{\parent^m(v)} \\
        &= \lambda(\parent^m(v) \to v) \sum_{u \in \Child^{m-j}(\parent^m(v))} q_j(u) w_u e_{\parent^m(v)}.
    \end{align*}
    Substituting our explicit formula for $q_j(u)$ yields
    \begin{align*}
        \sum_{u \in \Child^{m-j}(\parent^m(v))} q_j(u) w_u &= -\frac{1}{T_m} (C_{m,m-j})^{-p^*/p} \sum_{u \in \Child^{m-j}(\parent^m(v))} |w_u|^{p^*-1} \overline{\operatorname{sgn}(w_u)} w_u \\
        &= -\frac{1}{T_m} (C_{m,m-j})^{1-p^*} \sum_{u \in \Child^{m-j}(\parent^m(v))} |w_u|^{p^*} \\
        &= -\frac{1}{T_m} C_{m,m-j}.
    \end{align*}
    Thus, $B_\lambda^{m-j}(q_j) = -\frac{1}{T_m} C_{m,m-j} \lambda(\parent^m(v) \to v) e_{\parent^m(v)}$. Summing over $j$ gives
    \begin{align*}
        \sum_{j=1}^m B_\lambda^{m-j}(q_j) &= \left( \sum_{j=1}^m -\frac{1}{T_m} C_{m,m-j} \right) \lambda(\parent^m(v) \to v) e_{\parent^m(v)} \\
        &= -\frac{1}{T_m} \left( \sum_{k=0}^{m-1} C_{m,k} \right) \lambda(\parent^m(v) \to v) e_{\parent^m(v)} \\
        &= -\frac{T_m}{T_m} \lambda(\parent^m(v) \to v) e_{\parent^m(v)} \\
        &= -\lambda(\parent^m(v) \to v) e_{\parent^m(v)}.
    \end{align*}
    Therefore, $p_m = \lambda(\parent^m(v) \to v) e_{\parent^m(v)} - \lambda(\parent^m(v) \to v) e_{\parent^m(v)} = 0_X$, which shows $p_0,\ldots,p_m$ is a $\delta$-chain from $e_v$ to $0_X$, finishing the proof.
\end{proof}

The rooted case follows quickly from the work above.

\begin{theorem}\label{rooted theorem}
    Let $(V,E)$ be a rooted directed tree with no leaves and $\lambda$ a weight. We have the following:
    \begin{enumerate}[label=\upshape(\alph*)]
        \item Let $X=\ell^1(V)$, and let $B_\lambda$ be bounded on $X$. Then $B_\lambda$ is chain recurrent if and only if for every $v\in V$, $\sum_{n=0}^\infty\sup_{u\in\Child^n(v)}|\lambda(v\to u)|=\infty$.
        \item Let $X=\ell^p(V)$, $1<p<\infty$, and let $B_\lambda$ be bounded on $X$. Then $B_\lambda$ is chain recurrent if and only if for every $v\in V$, $\sum_{n=0}^\infty \big( \sum_{u\in \Child^n(v)} |\lambda(v\to u)|^{p^*}\big)^{1/p^*}=\infty$.
        \item Let $X=c_0(V)$, and let $B_\lambda$ be bounded on $X$. Then $B_\lambda$ is chain recurrent if and only if for every $v\in V$, $\sum_{n=0}^\infty \sum_{u\in \Child^n(v)} |\lambda(v\to u)|=\infty$.
    \end{enumerate}
\end{theorem}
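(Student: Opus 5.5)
We will reuse the proof of Theorem \ref{unrooted theorem} almost verbatim, treating (b) in detail and noting that (a) and (c) follow with the same modifications as before: the supremum estimate $|\sum_u \lambda(v\to u)g(u)|\le \sup_u|\lambda(v\to u)|\,\|g\|_1$ for $\ell^1$, and the $\ell^1$-sum bound for $c_0$.

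\textbf{Necessity.} Suppose $B_\lambda$ is chain recurrent. Fix $v\in V$ and $t>1$, and take a $1$-chain from $te_v$ to itself. By Lemma \ref{recurrence lemma}(a),
\[
te_v=B_\lambda^n(te_v)+\sum_{l=1}^n B_\lambda^{n-l}(g_l), \qquad \|g_l\|_p<1.
\]
Evaluate this at $v$. One point differs from the unrooted case: $B_\lambda^n(te_v)$ equals $t\lambda(\parent^n(v)\to v)e_{\parent^n(v)}$ when $\parent^n(v)$ exists and $0$ otherwise. In either case it vanishes at $v$ for $n\ge1$, since $\parent^n(v)\neq v$. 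The H\"older estimate from the proof of Theorem \ref{unrooted theorem} then gives
\[
t<\sum_{k=0}^{n-1}\Big(\sum_{u\in\Child^k(v)}|\lambda(v\to u)|^{p^*}\Big)^{1/p^*}.
\]
Since $t$ is arbitrary, the series diverges.

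\textbf{Sufficiency.} By Lemma \ref{recurrence lemma}(b), it suffices to produce, for each $v$ and $\delta>0$, a $\delta$-chain from $0_X$ to $e_v$ and one from $e_v$ to $0_X$.

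The first chain is exactly the forward construction in the proof of Theorem \ref{unrooted theorem}. That construction used only the divergence of $\sum_n A_n$ and the descendants of $v$, so it applies unchanged in the rooted setting.

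The second chain is where the root makes things trivial. Let $d$ be the generation of $v$, so $v\in\Gen_d$. Then $\parent^d(v)=\mathsf{root}$, and $B_\lambda^{d+1}(e_v)=0$ because the root has no parent. Hence the orbit
\[
e_v,\ B_\lambda(e_v),\ \ldots,\ B_\lambda^{d+1}(e_v)=0_X
\]
is an exact trajectory, and therefore a $\delta$-chain for every $\delta$. If a minimum length is needed, append further copies of $0_X$, which is fixed by $B_\lambda$.

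\textbf{Remaining check.} The only potential obstacle is verifying that the forward construction nowhere used unrootedness. It did not: it relies only on the support of the $g_l$ lying in $\Child^{n-l}(v)$ and on formula \eqref{B^n formula}, both of which are valid on any leafless tree. Leaflessness guarantees that each $\Child^k(v)$ is nonempty, so $A_k>0$ and the normalizations by $A_{n-l}$ are well defined. This completes the plan.
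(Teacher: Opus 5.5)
Your proposal is correct and follows the paper's proof essentially step for step. Necessity comes from rerunning the H\"older estimate on a $1$-chain from $te_v$ to itself. The chain from $0_X$ to $e_v$ is the forward construction of Theorem~\ref{unrooted theorem} reused verbatim. The chain back to $0_X$ is the exact orbit $e_v, B_\lambda(e_v),\ldots$, which you correctly note reaches $0_X$ after $d+1$ steps, where the paper just says ``for sufficiently large $n$''. Your side remarks---that $[B_\lambda^n(te_v)](v)=0$ even when $\parent^n(v)$ does not exist, and that leaflessness makes each $A_k>0$ so the normalizations are defined---are small clarifications the paper leaves implicit.
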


    \begin{proof}
        We prove (b); the cases of $\ell^1(V)$ and $c_0(V)$ follow by entirely analogous arguments. Fix $v\in V$. 

        ($\Rightarrow$) The argument used to derive condition (b-i) of Theorem \ref{unrooted theorem} began with a $1$-chain from $te_v$ to itself, and the manipulations that followed depended only on the formula \eqref{B^n formula} and on the existence of children, both of which are available in any leafless tree, rooted or not. Repeating that argument verbatim shows that if $B_\lambda$ is chain recurrent, then $\sum_{n=0}^\infty \big( \sum_{u\in \Child^n(v)} |\lambda(v\to u)|^{p^*}\big)^{1/p^*}=\infty$.

        ($\Leftarrow$) Assume the divergence condition holds for every $v\in V$. By Lemma \ref{recurrence lemma}(b), it suffices to show that for every $v\in V$ and every $\delta>0$, there is a $\delta$-chain from $0_X$ to $e_v$ and a $\delta$-chain from $e_v$ to $0_X$.
        
        The construction in the ($\Leftarrow$) direction of Theorem \ref{unrooted theorem} that produced a $\delta$-chain from $0_X$ to $e_v$ required only condition (b-i) and used solely the descendant structure of $v$. It therefore applies unchanged to the rooted setting and yields a $\delta$-chain from $0_X$ to $e_v$.

        A chain from $e_v$ to $0_X$ is easily obtained by the fact that $B_\lambda^n(e_v)=0_X$ for sufficiently large $n$. For such an $n$, setting $p_j\coloneqq B_\lambda^j(e_v)$ for $0\leq j\leq n$ yields $p_0=e_v$, $p_n=0_X$, and $p_{j+1}=B_\lambda(p_j)$ exactly. Thus $\|p_{j+1}-B_\lambda(p_j)\|=0$ for $0\leq j\leq n-1$, which shows $(p_j)_{j=0}^n$ is a $\delta$-chain for any $\delta>0$.
    \end{proof}

\begin{example}\label{Rolewicz operators}
    The simplest weighted shift to consider is when all the weights are the same, which is to consider a scalar multiple of the unweighted backward shift $B$. Let $\lambda\in \K$ be non-zero. Then the weight sequence defined by $\lambda_v=\lambda$ for each $v\in V$ corresponds to the \emph{Rolewicz operator} $\lambda B$, named after the important work of Rolewicz in \cite{Rolewicz}. As shown in \cite[Example 2.5]{GrossePapahypercyclic}, any Rolewicz operator is bounded on $\ell^1(V)$,  while it is bounded on $\ell^p(V)$, $1<p<\infty$ or on $c_0(V)$ if and only if $\sup_{v\in V} |\Child(v)|<\infty$. The conditions in Theorem \ref{unrooted theorem} can be simplified when considering a Rolewicz operator with the following result.

    \begin{corollary}\label{rolewicz corollary}
     Let $(V,E)$ be an unrooted leafless directed tree. Let $\lambda\in \K$ be non-zero and $\lambda B$ the corresponding Rolewicz operator. We have the following:
     \begin{enumerate}[label=\upshape(\alph*)]
         \item The operator $\lambda B$ is chain recurrent on $\ell^1(V)$ if and only if $|\lambda|=1$.
         \item Let $X=\ell^p(V), 1<p<\infty$, or $X=c_0(V)$, and let $\lambda B$ be bounded on $X$. Then $\lambda B$ is chain recurrent on $X$ if and only if for every $v\in V$, both
         \begin{enumerate}[label=\upshape(\roman*)]
             \item $\sum_{n=0}^\infty |\Child^n(v)|^{1/p^*} |\lambda|^n = \infty$, and
             \item $\lim_{n\to \infty} \sum_{j=0}^{n-1}|\Child^j(\parent^n(v))|^{1/p^*}|\lambda|^{(j-n)}=\infty$.
         \end{enumerate}
         If, additionally, the tree has a free left end, then condition (ii) reduces to the requirement $|\lambda|\leq 1$. Here, $p^*=1$ for $X=c_0(V)$. 
     \end{enumerate}
\end{corollary}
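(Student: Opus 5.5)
The plan is to specialize Theorem \ref{unrooted theorem} to the constant weight $\lambda_v=\lambda$. The only input is the path-weight formula $\lambda(w\to u)=\lambda^{n}$ for $u\in\Child^n(w)$.

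\textbf{Part (b).} For $u\in\Child^j(\parent^n(v))$ the ratio in condition (ii) is
\[
\left|\frac{\lambda(\parent^n(v)\to u)}{\lambda(\parent^n(v)\to v)}\right|=|\lambda|^{j}/|\lambda|^{n}=|\lambda|^{j-n}.
\]
Substituting this into (b-i), (b-ii), (c-i) and (c-ii) of Theorem \ref{unrooted theorem} gives
\[
\Big(\sum_{u\in\Child^n(v)}|\lambda|^{np^*}\Big)^{1/p^*}=|\Child^n(v)|^{1/p^*}|\lambda|^n,
\]
and the analogous expression for (ii), with $p^*=1$ in the $c_0$ case. This is exactly (i) and (ii) of the corollary.

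For the free left end addendum:
\begin{itemize}
\item The $j=n-1$ term of (ii) is $|\Child^{n-1}(\parent^n(v))|^{1/p^*}|\lambda|^{-1}$, which is at least $|\lambda|^{-1}$, but that alone is not enough.
\item Suppose the tree has a free left end, say $\Gen_k$ is a singleton for $k\le N$. For large $n$, the ancestors $\parent^n(v)$ lie in singleton generations. For $j$ with the generation of $\Child^j(\parent^n(v))$ at most $N$, that set is a single vertex, so the term equals $|\lambda|^{j-n}$. The number of such $j$ grows like $n$ minus a constant.
\item If $|\lambda|\le1$, these terms are each at least $1$, so (ii) diverges.
\item If $|\lambda|>1$, use boundedness: $|\Child(w)|\le M$ for all $w$, so $|\Child^j(w)|\le M^j$. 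The sum is then controlled by the terms with $j$ near $n$. The terms with $j\le n-c$ (where $c$ is the distance from $v$'s generation down to $N$) form a geometric tail $\sum_i|\lambda|^{-i}$ starting at $i=c$. The remaining $c$ terms have a fixed number of vertices, since $\Child^j(\parent^n(v))=\Child^{j-n+c}(\parent^c(v))$-ish for those $j$. Hence (ii) stays bounded and fails.
\end{itemize}

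\textbf{Part (a).} On $\ell^1$ the sup kills the cardinalities. Condition (i) becomes $\sum_n|\lambda|^n=\infty$, i.e. $|\lambda|\ge1$. Condition (ii) becomes $\lim_n\sum_{j=0}^{n-1}|\lambda|^{j-n}=\sum_{i=1}^{\infty}|\lambda|^{-i}=\infty$, i.e. $|\lambda|\le1$. Together these force $|\lambda|=1$. Note that $\Child^j(w)\neq\emptyset$ by leaflessness, so each sup is really attained as $|\lambda|^j$.

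\textbf{Main obstacle.} The delicate step is the $|\lambda|>1$ direction of the free left end claim. There one must check that the finitely many non-singleton generations between $N$ and $v$ contribute only boundedly many terms, each of bounded size independent of $n$. I will handle this by splitting the sum at the generation index $N$ and using $|\Child^j(w)|$ bounded for bounded $j$ relative to the fixed vertex $\parent^c(v)$.
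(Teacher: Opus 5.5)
Your proposal is correct. Part (a) and the substitution giving (b)(i)--(ii) are exactly the paper's argument. One small omission: applying Theorem~\ref{unrooted theorem}(a) requires $\lambda B$ to be bounded on $\ell^1(V)$, which holds for every Rolewicz operator, as recorded just before the corollary.

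The difference is in the free-left-end addendum. The paper uses the fact that (ii) is required for \emph{every} $v$.
\begin{itemize}
\item For necessity, it takes a single vertex $v$ in a singleton generation. Then $\Child^j(\parent^n(v))=\{\parent^{n-j}(v)\}$ for all $0\le j\le n-1$, so the sum is exactly $\sum_{i=1}^n|\lambda|^{-i}$, which diverges iff $|\lambda|\le 1$.
\item For sufficiency, it uses only leaflessness, $|\Child^j(\parent^n(v))|\ge 1$. Each term is then at least $|\lambda|^{j-n}\ge 1$, so the free left end plays no role there.
\end{itemize}

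You instead work with an arbitrary $v\in\Gen_k$ and split the sum at generation $N$. Your identity $\Child^j(\parent^n(v))=\Child^{j-n+c}(\parent^c(v))$ for $j\ge n-c$ is in fact exact: $\Child^{n-c}(\parent^n(v))$ is a nonempty subset of the singleton $\Gen_N$, so it equals $\{\parent^c(v)\}$. Hence the leftover terms $j=n-c+1,\dots,n-1$ are independent of $n$ and finite by boundedness. There are $c-1$ of them, not $c$.

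Your route is valid and proves slightly more: for $|\lambda|>1$, condition (ii) fails at every vertex rather than at just one. However, the ``main obstacle'' you flag is self-imposed. Choosing $v$ with $c=0$ eliminates the non-singleton terms entirely and removes any need for the bound $|\Child(w)|\le M$ in this step.
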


\begin{proof}
    Simplifying the conditions in Theorem \ref{unrooted theorem}(a) shows that $\lambda B$ is chain recurrent on $\ell^1(V)$ if and only if $\sum_{n=1}^\infty |\lambda|^n = \infty$ and $\lim_{n\to \infty}\sum_{j=0}^{n-1} |\lambda|^{j-n} = \infty$, which happens if and only if $|\lambda|=1$.

    To prove (b), let $X=\ell^p(V), 1<p<\infty$, or $X=c_0(V)$. The simplifications throughout the proof rest on the following two identities, which we shall use repeatedly in what follows: for every $v\in V$, $n\in \N$, and $0\leq j\leq n-1$,
    \begin{align*}
        \sum_{u\in \Child^n(v)} |\lambda(v\to u)|^{p^*} &= |\Child^n(v)||\lambda|^{np^*}, \text{ and} \\
        \sum_{u\in \Child^j(\parent^n(v))} \left| \frac{\lambda (\parent^n(v)\to u)}{\lambda(\parent^n(v)\to v)} \right|^{p^*} &= |\Child^j(\parent^n(v))||\lambda|^{(j-n)p^*}.
    \end{align*}
    Thus after simplifying the conditions in Theorem \ref{unrooted theorem} (parts (b) and (c)), $\lambda B$ is chain recurrent if and only if for every $v\in V$,
    \begin{enumerate}[label=\upshape(\roman*)]
        \item $\infty  = \sum_{n=0}^\infty (|\Child^n(v)||\lambda|^{np^*})^{1/p^*} = \sum_{n=0}^\infty |\Child^n(v)|^{1/p^*}|\lambda|^n$, and 
        \item $\infty = \lim_{n\to \infty} \sum_{j=0}^{n-1} (|\Child^j(\parent^n(v))||\lambda|^{(j-n)p^*})^{1/p^*} = \lim_{n\to \infty} \sum_{j=0}^{n-1} |\Child^j(\parent^n(v))|^{1/p^*}|\lambda|^{(j-n)}$.
    \end{enumerate} 
    
    Now suppose the tree has a free left end. Then there are vertices $v$ for which $\Child^j(\parent^n(v))=\{\parent^{n-j}(v)\}$ for every $0\leq j \leq n-1$. For such a vertex $v$, we have $\sum_{j=0}^{n-1} |\Child^j(\parent^n(v))|^{1/p^*} |\lambda|^{(j-n)}=\sum_{j=0}^{n-1} |\lambda|^{(j-n)}$, which approaches $\infty$ if and only if $|\lambda|\leq 1$. Hence $|\lambda|\leq 1$ is necessary. Conversely, if $|\lambda|\leq 1$, then since $|\Child^j(\parent^n(v))|\geq 1$, we have
    \begin{align*}
        \sum_{j=0}^{n-1} |\Child^j(\parent^n(v))|^{1/p^*} |\lambda|^{(j-n)} \geq \sum_{j=0}^{n-1} |\lambda|^{(j-n)} \to \infty,
    \end{align*}
    as desired, finishing the proof that (ii) is equivalent to $|\lambda|\leq 1$ when the tree has a free left end.
\end{proof}
    The rooted case involves the same calculations as the unrooted case.
    \begin{corollary}
         Let $(V,E)$ be a rooted directed leafless tree. Let $\lambda\in \K$ be non-zero and $\lambda B$ the corresponding Rolewicz operator. We have the following:
         \begin{enumerate}[label=\upshape(\alph*)]
             \item The operator $\lambda B$ is chain recurrent on $\ell^1(V)$ if and only if $|\lambda|\geq 1$.
             \item Let $X=\ell^p(V), 1<p<\infty$, or $X=c_0(V)$, and let $\lambda B$ be bounded on $X$. Then $\lambda B$ is chain recurrent on $X$ if and only if $\sum_{n=0}^\infty |\Child^n(v)|^{1/p^*} |\lambda|^{n} = \infty$.

             Here, $p^*=1$ for $X=c_0(V)$.
             \end{enumerate}
    \end{corollary}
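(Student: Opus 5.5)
This corollary follows by specializing Theorem \ref{rooted theorem} to the constant weight $\lambda_v=\lambda$, in the same way that Corollary \ref{rolewicz corollary} specializes Theorem \ref{unrooted theorem}. Only the forward divergence condition needs to be simplified. The basic observation is that for every $v\in V$, $n\in\N_0$ and $u\in\Child^n(v)$ we have $\lambda(v\to u)=\lambda^n$, since the path from $v$ to $u$ passes through exactly $n$ vertices after $v$. With the convention $\lambda(v\to v)=1$ this also covers $n=0$.

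For part (a), the characterizing quantity in Theorem \ref{rooted theorem}(a) becomes
\[
\sup_{u\in\Child^n(v)}|\lambda(v\to u)|=|\lambda|^n .
\]
Here we use that $\Child^n(v)\neq\emptyset$ because the tree is leafless, so the supremum is not over an empty set. The condition is therefore $\sum_{n=0}^\infty|\lambda|^n=\infty$ for every $v$. This geometric series diverges exactly when $|\lambda|\ge 1$, which gives (a).

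Recall that the Rolewicz operator is automatically bounded on $\ell^1(V)$ (by \cite[Example 2.5]{GrossePapahypercyclic}), so the theorem applies without further hypotheses. This contrasts with the unrooted case, where the backward condition forces $|\lambda|\le1$ and so pins down $|\lambda|=1$. Here no backward condition is present.

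For part (b), with $X=\ell^p(V)$, $1<p<\infty$, the identity $\sum_{u\in\Child^n(v)}|\lambda(v\to u)|^{p^*}=|\Child^n(v)|\,|\lambda|^{np^*}$ turns the condition of Theorem \ref{rooted theorem}(b) into $\sum_{n}|\Child^n(v)|^{1/p^*}|\lambda|^n=\infty$. For $X=c_0(V)$ the condition of Theorem \ref{rooted theorem}(c) reads $\sum_n|\Child^n(v)|\,|\lambda|^n=\infty$, which is the same formula with $p^*=1$. Boundedness is assumed in (b), so the theorem applies.

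The only point requiring care is that the statement of (b) does not quantify over $v$, whereas the theorem requires the divergence for every $v\in V$. I would read the condition as ``for every $v\in V$'', in line with Corollary \ref{rolewicz corollary}. Alternatively, one can check that it suffices to require divergence at the root. Indeed, for $v\in\Gen_k$ we have $\Child^{n}(\mathsf{root})=\bigcup_{w\in\Gen_k}\Child^{n-k}(w)$, and the set $\Gen_k$ is countable. If $\Gen_k$ is finite, then divergence of the series at the root forces divergence at some $w\in\Gen_k$, but not necessarily at every such $w$. So the equivalence ``root $\iff$ all $v$'' is not automatic, and I would keep the ``for every $v$'' formulation.

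I expect no genuine obstacle. The corollary is a direct computation once Theorem \ref{rooted theorem} is available.
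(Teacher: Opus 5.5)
Your proposal is correct and is essentially the paper's proof: substitute $\lambda(v\to u)=\lambda^n$ into Theorem~\ref{rooted theorem}. This gives the geometric series $\sum_n|\lambda|^n$ in (a), and $\sum_n|\Child^n(v)|^{1/p^*}|\lambda|^n=\infty$ for every $v\in V$ in (b), which is exactly how the paper reads that condition. Your aside reaches the right conclusion, since divergence at the root does not imply divergence at every vertex (e.g.\ a root with one child starting a single ray and another starting a binary tree, with $2^{-1/p^*}\le|\lambda|<1$), so simply delete the sentence claiming that divergence at the root suffices.
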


    \begin{proof}
        Simplifying the condition in Theorem \ref{rooted theorem}(a) shows $\lambda B$ is chain recurrent on $\ell^1(V)$ if and only if $\sum_{n=1}^\infty |\lambda|^n=\infty$, which happens if and only if $|\lambda|\geq 1$. For (b), let $X=\ell^p(V), 1<p<\infty$, or $X=c_0(V)$. Simplifying the other conditions in Theorem \ref{rooted theorem} shows $\lambda B$ is chain recurrent on $X$ if and only if  $\sum_{n=1}^\infty |\Child^n(v)|^{1/p^*} |\lambda|^{n} = \infty$ for every $v\in V$. Since the $n=0$ term is $|\Child^0(v)|^{1/p^*} |\lambda|^{0} = 1$, this series diverges if and only if $\sum_{n=0}^\infty |\Child^n(v)|^{1/p^*} |\lambda|^{n} = \infty$.
    \end{proof}
\end{example}

\begin{example}\label{symmetric trees example}
    Let $(V,E)$ be a directed tree and fix $v_0$ so as to enumerate the generations of the tree, picking $v_0=\mathsf{root}$ in the rooted case and picking an arbitrary vertex $v_0\in V$ otherwise. A weight $\lambda=(\lambda_v)_{v\in V}$ is called \emph{symmetric} if any two vertices belonging to the same generation have identical corresponding weights. For $v\in \Gen_n$, we can then write $\lambda_n=\lambda_v$. We say that a directed tree $(V,E)$ is \emph{symmetric} if any two vertices belonging to the same generation have the same number of children. For $v\in \Gen_n$ of a symmetric tree, we set $\gamma_n = |\Child^n(v)|$. The class of symmetric weighted backward shifts on symmetric trees form another notable class of examples, since they generalize the classical weighted backward shifts on $\N$ and $\Z$. We first record a simple computational lemma. 

    \begin{lemma}\label{simplify sum lemma}
        Let $(a_n)_{n\in \Z}$ be a (two-tailed) sequence of positive terms. Then the following are equivalent.
        \begin{enumerate}[label=\upshape(\roman*)]
            \item $\ds\sum_{n=0}^\infty \prod_{i=0}^n a_i =\infty$;
            \item for every $m\in \Z$, $\ds \sum_{n=0}^\infty \prod_{i=m}^{m+n} a_i =\infty$.
        \end{enumerate}
        Furthermore, the following are equivalent as well.
        \begin{enumerate}[label=\upshape(\roman*)]
        \setcounter{enumi}{2}
            \item $\ds\sum_{n=0}^\infty \prod_{i=0}^{n-1} a_{-i} =\infty$;
            \item for every $m\in \Z$, $\ds \sum_{n=0}^\infty \prod_{i=m-n}^{m-1} a_{-i} =\infty$.
        \end{enumerate}
    \end{lemma}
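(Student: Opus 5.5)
The idea is to compare the series in (ii) with the series in (i) by factoring out a fixed finite product, and then to use that a positive constant multiple does not affect divergence.

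\emph{First equivalence.} Setting $m=0$ in (ii) gives (i) directly, so (ii)$\Rightarrow$(i). For (i)$\Rightarrow$(ii), fix $m\in\Z$ and split into cases according to the sign of $m$.
\begin{itemize}
\item If $m>0$, then for $n\ge m$ we have $\prod_{i=0}^{n}a_i = P\prod_{i=m}^{n}a_i$, where $P=\prod_{i=0}^{m-1}a_i>0$. Hence $\sum_{n\ge m}\prod_{i=m}^{n}a_i = P^{-1}\sum_{n\ge m}\prod_{i=0}^n a_i=\infty$, because only finitely many terms of (i) have been dropped. After reindexing $n\mapsto n-m$, the left-hand side is exactly the series in (ii).
\item If $m<0$, then for $n\ge -m$ we have $\prod_{i=m}^{m+n}a_i = Q\prod_{i=0}^{m+n}a_i$ with $Q=\prod_{i=m}^{-1}a_i>0$. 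Summing over $n\ge -m$ gives $Q$ times the tail of the series in (i), which diverges.
\end{itemize}
Nonnegativity of all terms lets us discard or add finitely many terms freely.

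\emph{Second equivalence.} This is the same argument with the reflected sequence $b_i:=a_{-i}$ and with the index running backward. Again $m=0$ in (iv) gives (iii): the product $\prod_{i=-n}^{-1}a_{-i}=\prod_{k=1}^{n}a_k$, which matches (iii) up to the shift $\prod_{i=0}^{n-1}a_{-i}$. I must be careful here: for $m=0$, (iv) reads $\sum_n\prod_{i=-n}^{-1}a_{-i}=\sum_n\prod_{i=1}^{n}a_i$, whereas (iii) is $\sum_n\prod_{i=0}^{n-1}a_{-i}$. So I will instead work directly with general $m$. For $m$ and $n$ large, the product $\prod_{i=m-n}^{m-1}a_{-i}$ is the product of $a_{-i}$ over a window of length $n$ ending at $m-1$. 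Relative to the window $\{0,\dots,n'-1\}$ appearing in (iii), it differs only by a fixed finite block at the top end and by shifting where the window begins at the bottom. The main care point is therefore just the index bookkeeping; it is not a real obstacle.

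I would handle this bookkeeping by substituting $i\mapsto m-1-i$, which rewrites the term of (iv) as $\prod_{k=0}^{n-1}a_{-(m-1-k)}$. I will then check whether this product equals a window starting at the fixed index $-(m-1)$ extending downward, so that it matches (iii) after multiplying by a fixed positive finite product and then applying the same tail argument.

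If the windows instead grow toward $+\infty$ in the original indexing (which I would verify with $m=1$, $n=1,2$), the comparison needed is again an identity of the form "term $=$ constant $\times$ term of (iii)" for all large $n$. Divergence then transfers in both directions exactly as in the first part.
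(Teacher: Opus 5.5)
Your argument for (i)$\Leftrightarrow$(ii) is correct and is the paper's: factor out $\prod_{i=0}^{m-1}a_i$ or $\prod_{i=m}^{-1}a_i$ and compare tails. The gap is in (iii)$\Leftrightarrow$(iv). You leave the index computation unfinished. You then fall back on the claim that, even if the windows grow toward $+\infty$, each term of (iv) is a fixed constant times a term of (iii) for large $n$. That claim is false. Your own substitution gives $\prod_{i=m-n}^{m-1}a_{-i}=a_{1-m}a_{2-m}\cdots a_{n-m}$, which is a window growing upward. The terms of (iii) are $a_0a_{-1}\cdots a_{1-n}$. For large $n$ the two products share only finitely many factors, so no such identity can hold.

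In fact, after reindexing $m\mapsto 1-m$ and discarding the leading term $1$, (iv) as printed is just condition (ii), and the printed equivalence is false. Take $a_k=2$ for $k\geq 1$ and $a_k=\tfrac12$ for $k\leq 0$. Then every series in (iv) diverges, while the series in (iii) is $\sum_{n\ge 0}2^{-n}=2$. So no argument can close this step as stated. You saw the mismatch at $m=0$; following it through would have exposed the problem instead of leading you to the incorrect fallback.

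The statement contains a misprint. In the proof of Corollary~\ref{unrooted symmetric corollary} the lemma is applied to the sums $\sum_n\prod_{i=m-n}^{m-1}|\lambda_i|^{-1}$ and $\sum_n\prod_{l=0}^{n-1}c_{m-l}$. This shows the intended condition is (iv$'$): for every $m\in\Z$, $\sum_{n=0}^\infty\prod_{i=m-n}^{m-1}a_i=\infty$, with $a_i$ in place of $a_{-i}$. The paper writes out only (i)$\Leftrightarrow$(ii) and calls the other case ``essentially the same,'' which is accurate only for (iv$'$).

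For (iv$'$) your method goes through:
\begin{itemize}
\item For $m\geq 1$ and $n\geq m-1$, $\prod_{i=m-n}^{m-1}a_i=\big(\prod_{i=1}^{m-1}a_i\big)\prod_{i=0}^{n-m}a_{-i}$; when $m=1$ this is exactly the term of (iii).
\item For $m\leq 0$, $\prod_{i=m-n}^{m-1}a_i=\big(\prod_{i=m}^{0}a_i\big)^{-1}\prod_{i=0}^{n-m}a_{-i}$.
\end{itemize}
So in every case the $n$th term of (iv$'$) is a positive constant (depending only on $m$) times the $(n-m+1)$st term of (iii). The tail comparison from your first part then finishes the proof.
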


    \begin{proof}
    We only prove (i)$\Leftrightarrow$(ii) since the proof of (iii)$\Leftrightarrow$(iv) is essentially the same. For an integer $k\geq \max(1, -m)$, the $k$th partial sums of the series in (i) and (ii) satisfy
    \begin{align*}
        \text{for } m>0,& \sum_{n=0}^k \prod_{i=m}^{m+n} a_i =\frac{1}{a_0\cdots a_{m-1}} \sum_{n=0}^{m+k} \prod_{i=0}^n a_i - \frac{1}{a_0\cdots a_{m-1}} \sum_{n=0}^{m-1} \prod_{i=0}^n a_i, \text{ and}\\
        \text{for } m<0,& \sum_{n=0}^k \prod_{i=m}^{m+n} a_i = \sum_{n=0}^{-m-1}\prod_{i=m}^{m+n}a_i + \bigg(\prod_{i=m}^{-1} a_i\bigg)\sum_{n=0}^{m+k}\prod_{i=0}^n a_i,
    \end{align*}
    from which the equivalence of (i) and (ii) can be seen.
\end{proof}
    
    We use this lemma to show how the conditions of Theorems \ref{unrooted theorem} and \ref{rooted theorem} simplify in the case of a symmetric backward shift on a symmetric tree.

    \begin{corollary}\label{unrooted symmetric corollary}
         Let $(V,E)$ be an unrooted leafless symmetric directed tree and $\lambda=(\lambda_n)_{n\in \Z}$ be a symmetric weight on $V$. We have the following:
         \begin{enumerate}[label=\upshape(\alph*)]
             \item A continuous symmetric shift $B_\lambda$ is chain recurrent on $\ell^1(V)$ if and only if both
             \begin{enumerate}[label=\upshape(\roman*)]
                 \item $\sum_{n=0}^\infty \prod_{i=1}^n |\lambda_i|=\infty$, and
                 \item $\sum_{n=0}^\infty \prod_{i=0}^{n-1} |\lambda_{-i}|^{-1}=\infty$.
             \end{enumerate}
             \item Let $X=\ell^p(V), 1<p<\infty$, or $X=c_0(V)$, and let $B_\lambda$ be bounded on $X$. Then $B_\lambda$ is chain recurrent on $X$ if and only if 
             \begin{enumerate}[label=\upshape(\roman*)]
                 \item $\ds\sum_{n=0}^\infty \prod_{i=1}^n (\gamma_{i-1})^{1/p^*}|\lambda_i|=\infty$, and
                 \item for every $m\in \Z$, 
                 \[\ds\lim_{n\to \infty} \left(\frac{1}{|\lambda_{m-n+1}\cdots \lambda_m|}+\sum_{j=1}^{n-1} \frac{(\gamma_{m-n}\cdots \gamma_{m-n+j-1})^{1/p^*}}{|\lambda_{m-n+j+1}\cdots \lambda_m|}\right) = \infty.\] 
             \end{enumerate}
             If, additionally, the tree has a free left end, then condition (ii) reduces to 
             \begin{enumerate}[label=\upshape(\roman*)]
             \setcounter{enumii}{2}
                 \item $\ds\sum_{n=0}^\infty \prod_{i=0}^n \frac{1}{|\lambda_{-i}|}=\infty$.
             \end{enumerate}
             Here, $p^*=1$ for $X=c_0(V)$.
         \end{enumerate}
    \end{corollary}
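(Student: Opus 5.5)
The plan is to specialize Theorem \ref{unrooted theorem} vertex by vertex, using the fact that on a symmetric tree with symmetric weight every quantity in conditions (i) and (ii) depends only on the generation of $v$. Then Lemma \ref{simplify sum lemma} removes the dependence on that generation. I read $\gamma_k$ as the number of children of any vertex in $\Gen_k$. The two basic counts are as follows. For $v\in\Gen_m$ and $u\in\Child^n(v)$ we have $|\Child^n(v)|=\gamma_m\gamma_{m+1}\cdots\gamma_{m+n-1}$ and $\lambda(v\to u)=\lambda_{m+1}\cdots\lambda_{m+n}$. Similarly, with $w=\parent^n(v)\in\Gen_{m-n}$ (Lemma \ref{partition lemma}) and $u\in\Child^j(w)$, we have $|\Child^j(w)|=\gamma_{m-n}\cdots\gamma_{m-n+j-1}$ and
\[
\frac{\lambda(w\to u)}{\lambda(w\to v)}=\frac{1}{\lambda_{m-n+j+1}\cdots\lambda_m}.
\]

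For condition (i), substituting these counts shows that the $n$th term of Theorem \ref{unrooted theorem}(b-i), and of (c-i) with $p^*=1$, equals $\prod_{i=m+1}^{m+n}a_i$, where $a_i\coloneqq\gamma_{i-1}^{1/p^*}|\lambda_i|$. The $n=0$ term is the empty product $1$. Requiring divergence for every $m\in\Z$ is, by Lemma \ref{simplify sum lemma} (i)$\Leftrightarrow$(ii) after the harmless index shift $m\mapsto m+1$, equivalent to requiring it for $m=0$. This gives (b)(i). For $\ell^1$ the supremum kills the $\gamma$'s, so $a_i=|\lambda_i|$, which gives (a)(i).

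For condition (ii), substituting the counts into Theorem \ref{unrooted theorem}(b-ii), (c-ii) directly yields the displayed expression in (b)(ii). The $j=0$ term is $1/|\lambda_{m-n+1}\cdots\lambda_m|$, and for $j\ge1$ the $\gamma$ products appear. For $\ell^1$ only the $\lambda$-ratios remain, so the $n$th sum is $\sum_{k=1}^{n}\prod_{i=m-k+1}^{m}|\lambda_i|^{-1}$, writing $k=n-j$. This is a partial sum of a fixed series, so the limit is that series. I then apply Lemma \ref{simplify sum lemma} (iii)$\Leftrightarrow$(iv) with $a_{-i}\coloneqq|\lambda_{i+1}|^{-1}$. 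With this choice $\prod_{i=m-n}^{m-1}a_{-i}=1/|\lambda_{m-n+1}\cdots\lambda_m|$, and the $m=0$ instance is $\sum_n\prod_{i=0}^{n-1}|\lambda_{-i}|^{-1}$, which is (a)(ii).

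The step I expect to be the main obstacle is the free left end reduction, together with keeping the indices straight. If $\Gen_k$ is a singleton for all $k\le N$, then $\gamma_k=1$ for all $k<N$. Fix $m$ and reindex the sum in (b)(ii) by $k=n-j$. The $k$th term is $(\gamma_{m-n}\cdots\gamma_{m-k-1})^{1/p^*}/|\lambda_{m-k+1}\cdots\lambda_m|$, and since $\gamma_i=1$ for $i<N$, for $n$ large it equals $c_k\coloneqq(\gamma_N\cdots\gamma_{m-k-1})^{1/p^*}/|\lambda_{m-k+1}\cdots\lambda_m|$, which is independent of $n$ (with $c_k=1/|\lambda_{m-k+1}\cdots\lambda_m|$ once $m-k-1<N$). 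So the limit in (ii) is the series $\sum_k c_k$. Only finitely many $c_k$ carry nontrivial $\gamma$ factors, so this series diverges iff $\sum_k 1/|\lambda_{m-k+1}\cdots\lambda_m|$ diverges. By the same application of Lemma \ref{simplify sum lemma} as above, divergence for every $m$ is equivalent to divergence at a single $m$. Finally, condition (iii) as stated differs from the $m=0$ instance only by the factor $1/|\lambda_0|$ and an index shift, so the two are equivalent.
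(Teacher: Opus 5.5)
Your proposal is correct and follows essentially the same route as the paper. You substitute $|\Child^n(v)|=\gamma_m\cdots\gamma_{m+n-1}$ and the weight ratios $1/(\lambda_{m-n+j+1}\cdots\lambda_m)$ into Theorem~\ref{unrooted theorem}, remove the dependence on the generation $m$ with the shifting argument of Lemma~\ref{simplify sum lemma}, and in the free-left-end case use $\gamma_i=1$ for sufficiently negative $i$ so that only finitely many terms carry $\gamma$ factors; your observation that the $k$th term is eventually independent of $n$ is a slightly tidier version of the paper's bounds on $\Gamma_{n,m}$. One harmless slip: the $m=0$ instance $\sum_{k\ge1}1/|\lambda_{-k+1}\cdots\lambda_0|$ coincides exactly with (iii) after setting $n=k-1$, so no extra factor $1/|\lambda_0|$ is involved.
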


    \begin{proof}
    As usual, we fix some $v_0\in V$ to enumerate the generations of the tree. In the case of $\ell^1(V)$ in part (a), after simplifying the two conditions in Theorem \ref{unrooted theorem}(a), we see that $B_\lambda$ is chain recurrent if and only if for every $m\in \Z$,
     \begin{enumerate}
        \item[(i')] $\sum_{n=1}^\infty\prod_{i=m+1}^{m+n}|\lambda_i|=\infty$, and
        \item[(ii')] $\lim_{n\to \infty} \sum_{j=0}^{n-1} \prod_{i=m-n+j+1}^m \frac{1}{|\lambda_i|} = \infty$.
    \end{enumerate}
    Since $\lim_{n\to\infty}\sum_{j=0}^{n-1}\prod_{i=m-n+j+1}^{m}\frac{1}{|\lambda_i|} = \frac{1}{|\lambda_m|}\sum_{n=0}^{\infty}\prod_{i=m-n}^{m-1}\frac{1}{|\lambda_i|}$, Lemma \ref{simplify sum lemma} shows conditions (i') and (ii') are equivalent to (i) and (ii) in the statement of Corollary \ref{unrooted symmetric corollary}(a).

    For case (b), define the sequence $(a_k)_{k\in\Z}$ by $a_k=(\gamma_{k-1})^{1/p^*}|\lambda_k|$. Let $v\in V$ and suppose $v\in \Gen_m$, where $m\in \Z$. By the symmetry of the tree, we have  $|\Child^n(v)|=\gamma_m\cdots\gamma_{m+n-1}$ for $n\geq 1$. Now the first condition of Theorem \ref{unrooted theorem}(b) (or equivalently of (c) when $X=c_0(V)$) is equivalent to
    \begin{align*}
        \infty=\sum_{n=1}^\infty\bigg(\sum_{u\in \Child^n(v)} |\lambda(v\to u)|^{p^*}\bigg)^{1/p^*} = \sum_{n=1}^\infty \bigg(\prod_{i=m+1}^{m+n} \gamma_{i-1}|\lambda_i|^{p^*}\bigg)^{1/p^*}=\sum_{n=1}^\infty \prod_{i=1}^n a_{m+i},
    \end{align*}
    for each $m\in \Z$. By Lemma \ref{simplify sum lemma}, the above condition is equivalent to $\sum_{n=1}^\infty \prod_{i=1}^n a_i = \infty$. Since $a_0 > 0$, this diverges if and only if $1 + \sum_{n=1}^\infty \prod_{i=1}^n a_i = \infty$, which evaluates exactly to $\sum_{n=0}^\infty \prod_{i=1}^n (\gamma_{i-1})^{1/p^*}|\lambda_i|=\infty$, yielding the first condition.

    We now simplify the second condition from Theorem \ref{unrooted theorem}. Again let $v\in V$ and let $m\in \Z$ be the unique integer for which $v\in \Gen_m$. Then $\parent^n(v) \in \Gen_{m-n}$. For a fixed $j \in \{0, \ldots, n-1\}$, the number of children at generation $j$ of the ancestor $\parent^n(v)$ is $|\Child^j(\parent^n(v))| = \prod_{i=m-n}^{m-n+j-1} \gamma_i$. For any $u\in \Child^j(\parent^n(v))$, the ratio of weights is:
    \begin{align*}
        \left| \frac{\lambda(\parent^n(v) \to u)}{\lambda(\parent^n(v) \to v)} \right|^{p^*} = \frac{\prod_{i=m-n+1}^{m-n+j} |\lambda_i|^{p^*}}{\prod_{i=m-n+1}^{m} |\lambda_i|^{p^*}} = \frac{1}{\prod_{i=m-n+j+1}^m |\lambda_i|^{p^*}}.
    \end{align*} 
    Taking the $1/p^*$ power of the inner sums $T_{j,n}\coloneqq \sum_{u\in \Child^j(\parent^n(v))} \left| \frac{\lambda(\parent^n(v) \to u)}{\lambda(\parent^n(v) \to v)} \right|^{p^*}$ yields
    \begin{align*}
        (T_{j,n})^{1/p^*} = \frac{\prod_{i=m-n}^{m-n+j-1} \gamma_i^{1/p^*}}{\prod_{i=m-n+j+1}^m |\lambda_i|}.
    \end{align*}
    Defining $\Gamma_{n,m} = \prod_{i=m-n}^{m-1} \gamma_i^{1/p^*}$, we can rewrite the expression for $(T_{j,n})^{1/p^*}$ by multiplying the numerator and denominator by $\prod_{i=m-n+j}^{m-1} \gamma_i^{1/p^*}$ to get:
    \begin{align*}
        (T_{j,n})^{1/p^*} = \frac{\Gamma_{n,m}}{\left(\prod_{i=m-n+j}^{m-1} \gamma_i^{1/p^*} \right) \left( \prod_{i=m-n+j+1}^m |\lambda_i| \right)} = \frac{\Gamma_{n,m}}{\prod_{i=m-n+j+1}^m \gamma_{i-1}^{1/p^*}|\lambda_i|} = \frac{\Gamma_{n,m}}{\prod_{i=m-n+j+1}^m a_i}.
    \end{align*}
    The full sum for condition (ii) is:
    \begin{align*}
        S_{n,m} = \sum_{j=0}^{n-1} (T_{j,n})^{1/p^*} = \Gamma_{n,m} \sum_{j=0}^{n-1} \bigg(\prod_{i=m-n+j+1}^m a_i\bigg)^{-1}.
    \end{align*}
    By changing the index $k = n-j$, the sum becomes:
    \begin{align*}
        S_{n,m} = \Gamma_{n,m} \sum_{k=1}^n \bigg(\prod_{i=m-k+1}^m a_i\bigg)^{-1}.
    \end{align*}
    Now $S_{n,m}$ is exactly the expression in the limit of Corollary \ref{unrooted symmetric corollary}(b-ii). 

    If the tree has a free left end, then for all sufficiently negative $i$, we have $\gamma_i = 1$. Consequently, $\Gamma_{n,m}$ is bounded above by some constant (dependent on $m$) and bounded below by $1$. In this case, $S_{n,m}$ diverges for each $m\in \Z$ if and only if $\sum_{n=1}^\infty \frac{1}{\prod_{i=m-n+1}^m a_i} = \infty$ for each $m\in \Z$. By letting $c_i = 1/a_i$ and re-indexing $l = m-i$, this sum can be written as $\sum_{n=1}^\infty \prod_{l=0}^{n-1} c_{m-l} = \infty$, which by Lemma \ref{simplify sum lemma} happens if and only if $\sum_{n=1}^\infty \prod_{l=0}^{n-1} c_{-l} = \infty$. Because the tree has a free left end, there exists some integer $M \geq 0$ such that $\gamma_{-l-1}=1$ for all $l \geq M$. Thus for $l \geq M$, $c_{-l} = \frac{1}{\gamma_{-l-1}^{1/p^*}|\lambda_{-l}|} = \frac{1}{|\lambda_{-l}|}$. The terms of the series $\sum_{n=1}^\infty \prod_{l=0}^{n-1} c_{-l}$ thus differ from the terms of $\sum_{k=0}^\infty \prod_{l=0}^k \frac{1}{|\lambda_{-l}|}$ by at most a finite number of constant factors $\gamma_{-l-1}^{-1/p^*}$ at the beginning of the products. Since these factors are strictly positive, the series diverge together, which yields condition (iii).
\end{proof}

    As the proof of the unrooted case shows, we have the following in the rooted case.

   \begin{corollary}
         Let $(V,E)$ be a rooted leafless symmetric directed tree and $\lambda=(\lambda_n)_{n\in \N_0}$ be a symmetric weight on $V$. We have the following:
         \begin{enumerate}[label=\upshape(\alph*)]
             \item A continuous symmetric shift $B_\lambda$ is chain recurrent on $\ell^1(V)$ if and only if $\sum_{n=1}^\infty \prod_{i=1}^n |\lambda_i|=\infty$.
             \item Let $X=\ell^p(V), 1<p<\infty$, or $X=c_0(V)$, and let $B_\lambda$ be bounded on $X$. Then $B_\lambda$ is chain recurrent on $X$ if and only if $\ds\sum_{n=1}^\infty \prod_{i=1}^n (\gamma_{i-1})^{1/p^*}|\lambda_i|=\infty$.
             
             Here, $p^*=1$ for $X=c_0(V)$.
         \end{enumerate}
    \end{corollary}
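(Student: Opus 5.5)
The plan is to specialize Theorem \ref{rooted theorem} to symmetric weights on a symmetric rooted tree, following the first half of the proof of Corollary \ref{unrooted symmetric corollary}; the backward condition (ii) has no counterpart here. Enumerate the generations from $v_0=\mathsf{root}$. Each $v\in V$ lies in a unique $\Gen_m$ with $m\in\N_0$, and we set $\gamma_k=|\Child(w)|$ for $w\in\Gen_k$. This is the number of children of a vertex of generation $k$, which is the quantity actually used in the unrooted proof.

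\textbf{Step 1: reduce to a product condition at each $v$.} For $v\in\Gen_m$ and $n\ge1$, symmetry of the tree gives $|\Child^n(v)|=\gamma_m\gamma_{m+1}\cdots\gamma_{m+n-1}$. Symmetry of the weight gives $|\lambda(v\to u)|=\prod_{i=m+1}^{m+n}|\lambda_i|$ for every $u\in\Child^n(v)$. Put $a_k=\gamma_{k-1}^{1/p^*}|\lambda_k|$ for $k\ge1$, with $p^*=1$ for $c_0(V)$. The condition in Theorem \ref{rooted theorem}(b),(c) at $v$ then becomes
\[
\sum_{n=0}^\infty\prod_{i=1}^n a_{m+i}=\infty .
\]
For $\ell^1(V)$ the supremum over $u$ removes the cardinality factor, so Theorem \ref{rooted theorem}(a) gives the same condition with $a_k$ replaced by $|\lambda_k|$.

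\textbf{Step 2: show the condition at $m$ is independent of $m$.} This is the direction (i)$\Leftrightarrow$(ii) of Lemma \ref{simplify sum lemma}, restricted to $m\ge0$. To apply the lemma as stated, extend $(a_k)$ to a two-sided positive sequence arbitrarily. Alternatively, redo the case $m>0$ of its proof directly: the partial sums starting at $m$ equal the partial sums starting at $0$ multiplied by $1/(a_1\cdots a_m)$, minus a finite correction. Either way, divergence for $m=0$, namely $\sum_{n\ge1}\prod_{i=1}^n a_i=\infty$, is equivalent to divergence for every $m\ge0$.

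\textbf{Step 3: the $n=0$ term.} The $n=0$ term equals $1$, so adding or removing it does not affect divergence. This gives exactly the displayed series $\sum_{n=1}^\infty\prod_{i=1}^n|\lambda_i|$ in part (a) and $\sum_{n=1}^\infty\prod_{i=1}^n\gamma_{i-1}^{1/p^*}|\lambda_i|$ in part (b).

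\textbf{Main obstacle.} The only real point is bookkeeping: the factor $\gamma_{i-1}$ must attach to $\lambda_i$ with the correct index shift, and the lemma must be applied to one-sided sequences indexed by $\N_0$. Otherwise the argument is a direct specialization of Theorem \ref{rooted theorem}.
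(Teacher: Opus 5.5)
Your proposal is correct and follows essentially the paper's own (implicit) route: specialize Theorem \ref{rooted theorem} exactly as in the first half of the proof of Corollary \ref{unrooted symmetric corollary}, with $a_k=\gamma_{k-1}^{1/p^*}|\lambda_k|$, then Lemma \ref{simplify sum lemma} to drop the dependence on the generation, then the harmless $n=0$ term. You were also right to read $\gamma_k$ as the number of children of a generation-$k$ vertex rather than the literal $|\Child^k(v)|$ in the paper's definition, since the identity $|\Child^n(v)|=\gamma_m\cdots\gamma_{m+n-1}$ that the paper's computation relies on requires exactly that reading.
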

\end{example}

We close by noting that the two symmetric corollaries above specialize, in their simplest case, to the classical sequence-space characterizations of chain recurrence on the Banach spaces $\ell^p(\N)$, $\ell^p(\Z)$ ($1\leq p<\infty$), $c_0(\N)$, and $c_0(\Z)$. Viewing $\Z$ (resp.\ $\N$) as the unrooted (resp.\ rooted) leafless symmetric directed tree in which every vertex has a unique child, every generation is a singleton, so $\gamma_n=1$ for all $n$, and in the unrooted case the tree has a free left end. A symmetric weight thereby reduces to an ordinary scalar sequence, and our corollaries collapse to the following.

\begin{corollary}\label{classical corollary}
    Let $w=(w_n)$ be a sequence of nonzero scalars and let $B_w$ be the corresponding weighted backward shift, acting on basis vectors by $B_w(e_n)=w_n e_{n-1}$.
    \begin{enumerate}[label=\upshape(\alph*)]
        \item Let $X=\ell^p(\Z)$, $1\leq p<\infty$, or $X=c_0(\Z)$, and let $B_w$ be bounded on $X$. Then $B_w$ is chain recurrent on $X$ if and only if
        \[\sum_{n=1}^\infty |w_1 w_2\cdots w_n|=\infty \quad \text{and} \quad \sum_{n=1}^\infty\frac{1}{|w_{-n+1}\cdots w_{-1} w_0|}=\infty.\]
        \item Let $X=\ell^p(\N)$, $1\leq p<\infty$, or $X=c_0(\N)$, and let $B_w$ be bounded on $X$. Then $B_w$ is chain recurrent on $X$ if and only if $\ds\sum_{n=1}^\infty |w_1 w_2\cdots w_n|=\infty$.
    \end{enumerate}
\end{corollary}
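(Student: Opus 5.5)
The plan is to realize $\Z$ and $\N$ as symmetric leafless trees and specialize Corollary \ref{unrooted symmetric corollary} and the rooted symmetric corollary. For (a), take $V=\Z$ with $\parent(n)=n-1$, so $\Child(n)=\{n+1\}$. The tree is unrooted and leafless. Every vertex has exactly one child, so it is symmetric with $\gamma_n=1$ for all $n$.

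Fix $v_0=0$. Each generation is then $\Gen_n=\{n\}$: if $\parent^{n+m}(k)=\parent^m(0)$, then $k-n-m=-m$, so $k=n$. In particular the tree has a free left end. Any weight is automatically symmetric, with $\lambda_n=\lambda_{\{n\}}$.

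Next match conventions. In the paper, $B_\lambda e_u=\lambda_u e_{\parent(u)}$, so $B_\lambda e_n=\lambda_n e_{n-1}$. This agrees with $B_w$ when $\lambda_n=w_n$.

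Now apply Corollary \ref{unrooted symmetric corollary}. On $\ell^1(\Z)$ use part (a); on $\ell^p$ with $1<p<\infty$ and on $c_0$ use part (b) with its free-left-end reduction (iii). Since $\gamma\equiv1$, every factor $\gamma^{1/p^*}$ equals $1$, so the distinction between $p^*$ and $1$ disappears. Condition (i) becomes $\sum_{n\ge0}\prod_{i=1}^n|w_i|=\infty$. Its $n=0$ term is the empty product $1$, so this is equivalent to $\sum_{n\ge1}|w_1\cdots w_n|=\infty$.

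Condition (iii), or (ii) of part (a), becomes $\sum_{n\ge0}\prod_{i=0}^{n}|w_{-i}|^{-1}=\infty$ (respectively with upper index $n-1$). Reindex with $k=n+1$; the product $\prod_{i=0}^{k-1}|w_{-i}|^{-1}$ equals $1/|w_{-k+1}\cdots w_0|$. Adding or removing a single finite term such as the empty product $1$ does not affect divergence. This gives exactly the second condition of (a).

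For (b), take $V=\N_0$ with root $0$, $\parent(n)=n-1$, and $\gamma_n=1$. Apply the rooted symmetric corollary in the same way, with $\lambda_n=w_n$, to obtain $\sum_{n\ge1}|w_1\cdots w_n|=\infty$. If $\N$ is meant to start at $1$, shift the indexing so the root carries the first index; the root's weight never enters $B_w$, since $B_w e_{\mathrm{root}}=0$.

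The only real care needed is this bookkeeping of indices. We must check that the index shift in (iii) produces exactly $w_{-n+1}\cdots w_0$, and that in the rooted case the index starting the product, $w_1$, lines up with the first non-root vertex.
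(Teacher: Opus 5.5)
Your proposal is correct and is essentially the paper's own proof: you realize $\Z$ and $\N$ as chain trees with $\gamma_n=1$ and $\lambda_n=w_n$, then specialize Corollary~\ref{unrooted symmetric corollary} (part (a) for $\ell^1(\Z)$; part (b)(i) with the free-left-end reduction (iii) otherwise) and the rooted symmetric corollary. Your reindexing $k=n+1$ in (iii) is slightly more careful than the paper's remark about dropping an $n=0$ summand equal to $1$, because there the $n=0$ term is $1/|w_0|$ and your shift gives exactly $\sum_{k\ge1}1/|w_{-k+1}\cdots w_0|$.
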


\begin{proof}
    Identify $w$ with the symmetric weight on the chain tree as above. Substituting $\gamma_n=1$ into Corollary~\ref{unrooted symmetric corollary}(a)(i)--(ii) (for $\ell^1(\Z)$) and into Corollary~\ref{unrooted symmetric corollary}(b)(i),~(iii) (for $\ell^p(\Z)$, $1<p<\infty$, and $c_0(\Z)$), and dropping the $n=0$ summand (which contributes the finite constant~$1$) from each series, gives the two divergence conditions in (a). For (b), substituting $\gamma_n=1$ into the rooted symmetric corollary above gives the single condition in the form stated.
\end{proof}

This is precisely the characterization of chain recurrence on the classical Banach sequence spaces obtained by Bernardes and Peris in \cite[Theorems 14 and 16]{BernardesPeris}, where the more general setting of Fr\'echet sequence spaces is considered; the $\ell^1$ case had earlier been established in \cite[Proposition 20]{(1)Alves}. 
\bibliography{Bib}

\end{document}